\newtheorem{thm}{Theorem}[section]
 \newtheorem{cor}[thm]{Corollary}
 \newtheorem{lem}[thm]{Lemma}
 \newtheorem{prop}[thm]{Proposition}
 \theoremstyle{definition}
 \theoremstyle{remark}
 \numberwithin{equation}{section}
 \title{Relating the type A alcove path model to the right key of a semistandard Young tableau, with Demazure character consequences}
\author[MJW]{Matthew J. Willis}
\ead[MJW]{mwillis1@conncoll.edu}
\address[MJW]{Dept. of Mathematics, Connecticut College, New London, CT 06320, USA}
\begin{document}

\begin{abstract}
There are several combinatorial methods that can be used to produce type A Demazure characters (key polynomials).  The alcove path model of Lenart and Postnikov provides a procedure that inputs a semistandard tableau $T$ and outputs a saturated chain in the Bruhat order.  The final permutation in this chain determines a family of Demazure characters for which $T$ contributes its weight.  Separately, the right key of $T$ introduced by Lascoux and Sch{\"u}tzenberger also determines a family of Demazure characters for which $T$ contributes its weight.  In this paper we show that the final permutation in the chain produced by the alcove model corresponds bijectively to the right key of the tableau.  From this it follows that the generating sets for the Demazure characters produced by these two methods are equivalent.
\end{abstract}

\begin{keyword}
Demazure character \sep type A alcove model \sep filling map \sep right key \sep scanning tableau

\MSC{05E10, 05E05, 17B10}
\end{keyword}

\maketitle

\section{Introduction}

In their 1990 paper \cite{LS} Lascoux and Sch\"{u}tzenberger introduced the notion of the ``right key'' of a semistandard Young tableau.  One of the foremost applications of the right key is presented as Theorem 1 of \cite{RS1}, which provides a type A Demazure character formula that sums over a set of semistandard Young tableaux whose right keys satisfy a certain condition.  Several equivalent methods have since been introduced to compute the right key of a semistandard tableau $T$.  The method from \cite{Wil} produces the ``scanning tableau'' $S(T)$ for $T$, which was shown to equal the right key of $T$.

In their 2007 paper \cite{LP} Lenart and Postnikov introduced the alcove path model.  Among many other applications, this model can be used to produce Demazure characters in arbitrary type.  When specialized to type A, the Demazure character is given as a sum over certain ``admissible subsets''.  The type A ``filling map'' is described in Section 3 of \cite{Le2}.  Its inverse inputs a semistandard tableau and outputs a saturated chain in the Bruhat order.  These chains are in bijection with the admissible subsets.

The main result of this paper is as follows:  Given a semistandard tableau $T$, find its scanning tableau $S(T)$.  The scanning tableaux are in bijection with certain permutations; denote the permutation for $S(T)$ by $\sigma_T$.  Then for the same $T$ apply the inverse of the filling map to produce its saturated Bruhat chain, denoted $B_T$.  Theorem \ref{finalequivalence} states that the final permutation in the chain $B_T$ is $\sigma_T$.  Thus we have proved that the final permutation in $B_T$, which plays a role in the alcove model world analogous to the role of the right key in the tableau world, has a key tableau that is indeed equal to the right key of Lascoux and Sch\"{u}tzenberger.  The conjecture of this equality arose during discussions with Lenart.  The connection between the two subjects is obtained here by forming the inverse of the filling map from \cite{Le2}.  The results presented in Section 5 make this connection completely explicit.  From the main result it will follow that not only are the Demazure characters produced by these two methods equal, but their generating sets are as well.  We achieve this by providing a set of semistandard tableaux that is in direct correspondence with the appropriate admissible subsets.  This set of tableaux is seen to equal the set of tableaux from Theorem 1 of \cite{RS1} mentioned above.

This paper is organized as follows:  Section 2 provides the necessary familiar definitions for our work.  Section 3 recalls the ``scanning method'' of \cite{Wil} and introduces some new terminology for it.  Section 4 provides a slightly simplified version of the inverse of the filling map from the type A specialization of the alcove model.  Section 5 proves the main result, which is the relationship between the methods in Sections 3 and 4.  Section 6 gives the details of the Demazure character equalities that are a consequence of the main result.

\section{Background definitions}

Fix a positive integer $n$, and consider it fixed henceforth.  An \emph{n-partition} $\lambda = (\lambda_1, ... , \lambda_n)$ is a sequence of weakly decreasing non-negative integers.  Let $\Lambda_n^+$ denote the set of all $n$-partitions with $\lambda_n = 0$.  (Arbitrary $n$-partitions are fine for Sections 2 through 5, but we will restrict to these partitions that correspond to the dominant weights of type $A_{n-1}$.)  Fix a non-zero $\lambda \in \Lambda_n^+$.  The \emph{Young diagram of $\lambda$} is a diagram consisting of $\lambda_i$ left-justified empty boxes in the $i^{th}$ row for $1 \leq i \leq n-1$.  Henceforth we will simply use $\lambda$ to refer to its Young diagram.  Define $c_i$ to be the number of boxes in the $i^{th}$ column of $\lambda$ for $1 \leq i \leq \lambda_1$, i.e. the \emph{length} of the $i^{th}$ column of $\lambda$.  Let $1 \leq \zeta_1 < ... < \zeta_d \leq n-1$ denote the \emph{distinct column lengths of} $\lambda$.  Set $\zeta_0 := 0$ and $\zeta_{d+1} := n$. Let $\beta_h$ denote the index of the rightmost column of length $\zeta_h$ for $1 \leq h \leq d$, and set $\beta_{d+1} := 1$.

Let $(j,i)$ denote the intersection of the $j^{th}$ column and $i^{th}$ row of $\lambda$.  (We reverse from the normal convention because the columns play a larger role than the rows in this paper.)  Write $(j,i) \in \lambda$ if and only if $1 \leq j \leq \lambda_1$ and $i \leq c_j$.  Define a reading order on $\lambda$ by $(l,k) \leq (j,i)$ if $l < j$ or $l = j$ and $k \geq i$.  In this case we say that the location $(l,k)$ occurs (weakly) before $(j,i)$, and thus $(j,i)$ occurs (weakly) after $(l,k)$.  We will refer to \emph{advancing a location}, by which we mean increasing the location via this ordering to one that occurs after it.  The location immediately following $(j,i)$ is $(j,i-1)$, and the location immediately preceding it is $(j,i+1)$.  For the sake of convention, identify $(j,i-1)$ with $(j+1, c_{j+1})$ when $i = 1$, and identify $(j,i+1)$ with $(j-1,1)$ when $i = c_j$.

A \emph{filling} of $\lambda$ is an assignment of one number to each box in $\lambda$.  Define the set $[n] := \{ 1, 2, ... , n \}$.  An \emph{$n$-semistandard tableau} $T$ is a filling of $\lambda$ with values from $[n]$ such that the values weakly increase from left to right within each row and strictly increase top to bottom within each column.  In this case $\lambda$ is called the \emph{shape} of $T$.  Let $\mathcal{T}_\lambda$ denote the set of all $n$-semistandard tableaux with shape $\lambda$.  Given $T \in \mathcal{T}_\lambda$, let $T(j,i)$ be the value in $T$ at the location $(j,i) \in \lambda$.  Use $C_1, ... , C_{\lambda_1}$ to denote the columns of $T$ from left to right; hence the length of $C_i$ is $c_i$.  We say that $T$ is a \emph{key} if the values in $C_i$ also appear in $C_{i-1}$ for $1 < i \leq \lambda_1$.  The \emph{right key of $T$}, denoted $R(T)$, is a key determined by the values of $T$ that was introduced by Lascoux and Sch\"{u}tzenberger in \cite{LS}.  (We will not need a computational definition for $R(T)$.)

A \emph{permutation} is a bijection from $[n]$ to itself, and the set of all permutations is denoted $S_n$.  Given $\phi \in S_n$ we will often refer to its \emph{one-rowed form} $(\phi_1, ... , \phi_n)$; here $\phi_i$ is the image of $i$ under $\phi$ for $1 \leq i \leq n$.  Define $S_n^\lambda$ to be the set of all $\phi \in S_n$ such that $\phi_{\zeta_{h-1}+1} < ... < \phi_{\zeta_h}$ for $1 \leq h \leq d+1$.  (This is the set of minimal coset representatives of $S_n \slash S_\lambda$, where $S_\lambda$ is the subgroup of permutations that fix $\lambda$.)  Define the \emph{$\lambda$-key of $\phi$}, denoted $Y_\lambda(\phi)$, to be the key of shape $\lambda$ whose columns of length $\zeta_h$ contain $\phi_1, ... , \phi_{\zeta_h}$ arranged in increasing order for $1 \leq h \leq d$.  For an example, refer to Figure 1 in Section 4.  The figure contains the one-rowed forms of several permutations, written vertically.  Let $\phi$ be the rightmost permutation and let $\lambda = (4,4,3,2,1,1,1)$.  Then the rightmost tableau in Figure 1 is $Y_\lambda(\phi)$.

If $\lambda$ is not strict, i.e. not all parts are distinct, then there are multiple permutations in $S_n$ that will yield the same key.  Specifically, all permutations in a given coset of $S_n \slash S_\lambda$ have the same key.  By the construction of $S_n^\lambda$ and the definition of the key of a permutation, we see that if multiple permutations produce the same key, the shortest permutation in the Bruhat order that produces this key is in $S_n^\lambda$.  It is well known that this key construction is a bijection from $S_n^\lambda$ to the set of keys of shape $\lambda$.  Further, it can be seen that for $\phi, \psi \in S_n^\lambda$, one has $Y_\lambda(\phi) \leq Y_\lambda(\psi)$ if and only if $\phi \leq \psi$ in the Bruhat order \cite{BB}.

\section{The Scanning Method}

The following method inputs a semistandard tableau $T$ of shape $\lambda$ and outputs its ``scanning tableau'' $S(T)$, which is a key of the same shape.  By Theorem 4.5 of \cite{Wil}, the scanning tableau is equal to the right key $R(T)$ of $T$.

Fix $\lambda \in \Lambda_n^+$ and let $T \in \mathcal{T}_\lambda$.  Fix $1 \leq j \leq \lambda_1$, as the procedure is applied once to each column of $\lambda$.  Initialize the \emph{scanning paths} from the $j^{th}$ column by $P(T;j,i) := \{ (j,i) \}$ for $1 \leq i \leq c_j$.  Technically speaking the scanning paths are sets of locations in $\lambda$, but we will also refer to the values in a scanning path, which are simply the values in $T$ at the locations in the path.  The paths are constructed from bottom to top using the following definition:  Given a sequence $x_1, x_2, ...$, define its \emph{earliest weakly increasing subsequnce (EWIS)} to be the sequence $x_{a_1}, x_{a_2}, ...$, where $a_1 = 1$, and for $b > 1$ the index $a_b$ is the smallest index such that $x_{a_b} \geq x_{a_{b-1}}$.  Consider the values $T(l, c_l)$ for $l \geq j$ to form a sequence and compute its EWIS.  Each time a value is added to this EWIS, append its location to $P(T;j,c_j)$.  When this process terminates, delete the values and boxes in $P(T;j,c_j)$ from $T$ and $\lambda$, then repeat the process for the new lowest box in $C_j$.

In general, to compute $P(T;j,i)$ for $1 \leq i < c_j$:  Compute and then delete $P(T;j,k)$ for $c_j \geq k > i$.  Use the values in the lowest box of the $j^{th}$ through rightmost columns of the resulting tableau to create a sequence, and compute its EWIS.  Each time a value is added to the EWIS, append its location to $P(T;j,i)$.

For an example, let $T$ be the first tableau in Figure 1, located in Section 4.  Its scanning paths that begin in its first column are indicated in the second tableau in Figure 1:  A superscript of $x$ on $T(j,i)$ indicates that $(j,i) \in P(T;1,x)$.  To compute $P(T;1,i)$ using this figure, one must imagine that the entries with superscripts larger than $i$ and their boxes have been deleted from $T$ and $\lambda$.

It can be seen that the result of deleting a scanning path always leaves a valid shape and a semistandard tableau.  Further, once $P(T;j,1)$ has been computed and deleted then $C_j$ through $C_{\lambda_1}$ have been deleted.  In other words, every location $(l,k) \geq (j,c_j)$ is contained in exactly one scanning path that begins in the $j^{th}$ column. To create the \emph{scanning tableau $S(T)$}, apply the above process once for each column of $T$.  Then define $S(T;j,i)$ (the value in $S(T)$ at the location $(j,i)$ ) to be $T(l,k)$, where $(l,k)$ is the final location in $P(T;j,i)$.  Continuing the example, the third tableau in Figure 1 is $S(T)$.

The scanning tableau can be used to produce a permutation via the inverse of the bijection described in Section 2.  Fix $T \in \mathcal{T}_\lambda$ and find its scanning tableau $S(T)$.  Define $\sigma_T$ to be the permutation such that the values $\sigma_{\zeta_{h-1}+1}, ... , \sigma_{\zeta_h}$ for $1 \leq h \leq d$ are the values in the columns of length $\zeta_h$ of $S(T)$ that are not in the columns of length $\zeta_{h-1}$, arranged in increasing order, and $\sigma_{\zeta_d + 1} , ... , \sigma_{\zeta_{d+1}} = \sigma_n$ are the values from $[n]$ that do not appear in the first column of $S(T)$, arranged in increasing order.  Since $S(T)$ is a key, this process is well-defined.  By construction we have $\sigma_T \in S_n^\lambda$.  Further,  we see that $S(T) = Y_\lambda(\sigma_T)$.  Continuing the example, $\sigma_T$ is the rightmost permutation in Figure 1.

Given a column $l < \lambda_1$ and a location $(j,i) > (l, 1)$, for $1 \leq k \leq c_l$ define the \emph{most recent location of $P(T;l,k)$ relative to $(j,i)$} to be the latest location in $P(T;l,k)$ that occurs before $(j,i)$.  Computationally, this restriction simply truncates the sequences from which the EWIS's will be computed, and hence truncates the $P(T;l,k)$.  The \emph{most recent value of $P(T;l,k)$ relative to $(j,i)$} is the value in $T$ at the most recent location of $P(T;l,k)$ relative to $(j,i)$.  Continuing the example, in the second tableau let $(j,i) = (3,3)$, $l = 1$, and $k = 5$. Then the most recent location of $P(T;1,5)$ relative to $(3,3)$ is $(2,3)$, and the most recent value is 7.

\begin{lem}\label{recent}The most recent value of $P(T;l,k)$ relative to $(j,i)$ decreases as $k$ decreases.\end{lem}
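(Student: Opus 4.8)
The plan is to prove the lemma by induction on the number of columns of $\lambda$ to the left of column $j$, or more directly by a local exchange argument comparing the construction of $P(T;l,k)$ and $P(T;l,k-1)$. The key structural fact I would exploit is that when we compute $P(T;l,k)$ we have already deleted $P(T;l,k')$ for all $k' > k$ (including, crucially, $P(T;l,k+1), \dots$), so the scanning paths with \emph{larger} starting row in column $l$ lie strictly "earlier/upper" in the reading order than the material available to $P(T;l,k)$. In particular $P(T;l,k)$ starts at $(l,k)$, which occurs strictly after the start $(l,k+1)$ of the path one index higher, and it is built by an EWIS on the sequence of lowest-box values of the columns $l, l+1, \dots$ of the tableau \emph{after} the higher paths have been removed. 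I would first record the elementary observation that each $P(T;l,k)$ is weakly monotone in reading order (it advances column by column, and within the construction always moves to a location occurring later) so "the most recent location relative to $(j,i)$" is simply the last location of the truncated path, and its value is the last term so far of the EWIS.

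The main step is then: fix $(j,i)$ and compare the truncated paths $P(T;l,k-1)$ and $P(T;l,k)$, both cut off at $(j,i)$. I would argue that at every column $m$ with $l \le m$ and $(m,\cdot)$ occurring before $(j,i)$, the location of $P(T;l,k-1)$ in column $m$ occurs weakly after (i.e. is weakly lower than) the location of $P(T;l,k)$ in column $m$, and — because the EWIS always picks the \emph{lowest} box that is $\ge$ the previous value — the value carried by $P(T;l,k-1)$ at column $m$ is weakly less than the value carried by $P(T;l,k)$ at column $m$. The base column $m = l$ is immediate: $P(T;l,k-1)$ starts at $(l,k-1)$ with value $T(l,k-1) < T(l,k)$, the start of $P(T;l,k)$, and $(l,k-1)$ is lower than $(l,k)$. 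For the inductive step on $m$, I would use the semistandardness of $T$ together with the fact that the higher path $P(T;l,k)$ was deleted before $P(T;l,k-1)$ is computed: the column-$(m{+}1)$ candidates seen by the $(k-1)$-path are the leftover boxes after the $k$-path (and everything above it in that column, via earlier deletions) has already been removed, so the EWIS for the $(k-1)$-path, starting from a smaller value, cannot land strictly above where the $k$-path landed, and the "lowest box $\ge$ previous value" rule forces its value to stay weakly below. Taking $m$ to be the last column reached before $(j,i)$ yields exactly the claim that the most recent value of $P(T;l,k-1)$ relative to $(j,i)$ is $\le$ that of $P(T;l,k)$.

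The subtle point — and I expect it to be the main obstacle — is the bookkeeping of \emph{which} tableau each path is computed in: $P(T;l,k)$ and $P(T;l,k-1)$ see different tableaux (the latter has all of $P(T;l,k), P(T;l,k{+}1), \dots$ removed), so the two EWIS computations are not literally run on nested sequences, and one must show the removal of the higher path only \emph{helps} keep the lower path below it rather than accidentally letting it jump up. The clean way to handle this is to phrase the whole comparison in terms of the tableau $T'$ obtained after deleting $P(T;l,k{+}1),\dots,P(T;l,c_l)$ — in $T'$ the $k$-path is the bottom scanning path and the $(k-1)$-path is computed after further removing the $k$-path — and then to verify the column-by-column domination using only semistandardness of $T'$ and the defining property of the EWIS (earliest, hence lowest, admissible box), together with the fact that deleting a scanning path leaves a semistandard tableau of partition shape, which is already asserted in Section 3. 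Once the per-column inequality is set up, the induction on columns is routine, so the real content is isolating and proving that single monotonicity step.
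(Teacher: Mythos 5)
Your proposal takes a genuinely different route from the paper's. You compare $P(T;l,k-1)$ to $P(T;l,k)$ column-by-column, inducting through $m = l, l+1, \ldots$ and then chaining from $k = c_l$ downward. The paper instead compares $P(T;l,h)$ to $P(T;l,k)$ directly for arbitrary $h < k$ using a single global property of the EWIS: its final value is the maximum of its input sequence, and the EWIS contains every occurrence of that maximum. Concretely, if $(a,b)$ is the most recent location of $P(T;l,h)$ relative to $(j,i)$ and $(a,b')$ is the bottom box of column $a$ at the time $P(T;l,k)$ was computed (so $b' \geq b$ and $T(a,b')$ lies in the truncated input sequence of the $k$-path), then the $k$-path's most recent value $T(x,y)$ satisfies $T(x,y) \geq T(a,b')$; if $b' > b$, column-strictness gives $T(a,b') > T(a,b)$, while if $b' = b$, equality $T(x,y) = T(a,b)$ would have forced $(a,b)$ into the $k$-path, a contradiction. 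That short argument sidesteps essentially all of the bookkeeping you correctly flag as the subtle point. Your route can be made rigorous, but it costs more: an EWIS can skip a column entirely, so ``the location of $P(T;l,k-1)$ in column $m$'' is not always defined --- you must instead carry the most recent value into each column and split into cases by which (if either) of the two paths actually lands in column $m$; and the truncation at $(j,i)$ can cut partway through column $j$, where the two paths may disagree about whether their column-$j$ box is included. One geometric slip to fix before writing it up: in the paper's reading order, ``occurs after'' means a smaller row index (higher up the column), so $(l,k-1)$ occurs after $(l,k)$ but is not ``lower than'' it, and the boxes stripped from a column by deleting the paths $P(T;l,k')$ with $k' > k$ lie \emph{below} the $k$-path's box in that column, not above it.
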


\begin{proof}Fix $1 \leq l < \lambda_1$.  Let $1 \leq h < k \leq c_l$ and $(j,i) > (l,1)$.  Let $(a,b)$ and $(x,y)$ be the most recent locations of $P(T;l,h)$ and $P(T;l,k)$ relative to $(j,i)$ respectively.  Since $P(T;l,k)$ is computed before $P(T;l,h)$, the location $(a,b)$ is in the shape used to compute $P(T;l,k)$ and was not appended to $P(T;l,k)$.  Let $(a, b^\prime)$ denote the bottom location of the $a^{th}$ column when $P(T;l,k)$ was computed, i.e. the value $T(a,b^\prime)$ is in the sequence whose EWIS is used to determine $P(T;l,k)$.  It is easy to see from its definition that the final value in the EWIS obtained from a finite sequence is the largest value in that sequence.  Further, the EWIS contains all occurrences of the largest value.  From this we have $T(x,y) \geq T(a, b^\prime)$.

If $b^\prime = b$, then $T(x,y) = T(a,b)$ would mean that $(a,b)$ was appended to $P(T;l,k)$, which is false.  Thus $T(x, y) > T(a,b)$.  Assume $b^\prime > b$.  By the column-strict condition on $T$, we have $T(a,b^\prime) > T(a, b)$.  Thus $T(x,y) \geq T(a, b^\prime) > T(a,b)$.  \end{proof}

\begin{lem}\label{recentpath}Fix $1 \leq l < \lambda_1$ and $(j,i) > (l, 1)$.  The path from the $l^{th}$ column that contains $(j,i)$ is the path with the largest most recent value relative to $(j,i)$ that is less than or equal to $T(j,i)$.\end{lem}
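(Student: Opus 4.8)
The plan is to track how the value $T(j,i)$ arrives in its scanning path from the $l^{th}$ column. When we compute the paths $P(T;l,c_l), P(T;l,c_l-1), \dots$ in order, each deletion removes some locations, and the location $(j,i)$ is appended to exactly one of them, say $P(T;l,k)$. I would argue about the moment $(j,i)$ gets appended: at that stage we are forming an EWIS from the sequence of bottom entries of columns $l$ through $\lambda_1$ in the partially-deleted tableau, and $(j,i)$ is the bottom of the $j^{th}$ column at that point. Let $(a,b)$ be the location appended to $P(T;l,k)$ immediately before $(j,i)$ — i.e. the most recent location of $P(T;l,k)$ relative to $(j,i)$. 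By the EWIS rule, $T(j,i) \geq T(a,b)$ and, crucially, $(j,i)$ is the \emph{earliest} location after $(a,b)$ in the scanning sequence whose value is $\geq T(a,b)$; every location strictly between $(a,b)$ and $(j,i)$ in that sequence has value $< T(a,b)$.

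The two things to establish are: (i) the most recent value of $P(T;l,k)$ relative to $(j,i)$ is $\leq T(j,i)$ — this is immediate from $T(a,b) \leq T(j,i)$ above; and (ii) maximality — no path $P(T;l,k')$ whose most recent value relative to $(j,i)$ lies in the interval $(T(a,b), T(j,i)]$ can have $(j,i)$ fail to be in $P(T;l,k)$, equivalently every such path's most recent value is either $> T(j,i)$ or $\leq T(a,b)$. By Lemma \ref{recent}, the most recent values of the paths $P(T;l,c_l), \dots, P(T;l,1)$ relative to $(j,i)$ are weakly increasing as the index increases, so it suffices to rule out the two neighbors $P(T;l,k+1)$ and $P(T;l,k-1)$. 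For $P(T;l,k-1)$ (computed after $P(T;l,k)$, so in a tableau with $P(T;l,k)$ already deleted): I would use that when $(j,i)$ is appended to $P(T;l,k)$, the location $(a,b)$ has already been deleted, and I would trace what remains at column $j$ to show that the most recent value of $P(T;l,k-1)$ relative to $(j,i)$ cannot exceed $T(a,b)$ — roughly because the EWIS producing $P(T;l,k-1)$ must, by the time it reaches (past) $(j,i)$, have last landed at a location with value $\leq T(a,b)$, else it would have been forced to include a location between $(a,b)$ and $(j,i)$ that $P(T;l,k)$ skipped. For $P(T;l,k+1)$ (computed before $P(T;l,k)$): its most recent location relative to $(j,i)$ is some $(x,y)$ with $T(x,y) > T(a,b)$ by Lemma \ref{recent}; I must show $T(x,y) > T(j,i)$. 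Here I would use the "earliest" clause again: if $T(a,b) < T(x,y) \leq T(j,i)$, then when computing $P(T;l,k)$ the entry at $(x,y)$ (or whatever sits at the bottom of column $x$ at that stage, which by column-strictness is $\geq T(x,y)$) should have been picked up by the EWIS before reaching $(j,i)$, contradicting that $(a,b)$ is the immediate predecessor of $(j,i)$ in $P(T;l,k)$.

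The main obstacle is the bookkeeping in (ii): the tableaux in which consecutive paths are computed differ by the deletion of one full scanning path, so "the bottom entry of column $x$ at stage $k$" need not be $T(x,y)$ — I need the column-strictness inequalities to control how these bottom entries compare across stages, and I need to be careful that deleting $P(T;l,k)$ only removes locations occurring \emph{before} wherever the relevant EWIS is currently reading, so the comparison logic transfers cleanly. I expect to organize this by fixing the stage at which $(j,i)$ is appended to $P(T;l,k)$ and expressing everything relative to that snapshot of $T$ and $\lambda$, then invoking Lemma \ref{recent} to reduce from "all other paths" to "the two adjacent paths," and finally closing each of those two cases with the earliest-occurrence property of the EWIS together with the column-strict inequality $T(x,b') \geq T(x,b)$ for $b' > b$.
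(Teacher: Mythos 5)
Your overall strategy is sound and different from the paper's: you fix the path $P(T;l,k)$ that actually contains $(j,i)$, name its immediate predecessor $(a,b)$, and then use Lemma \ref{recent} to reduce the maximality claim to comparing with the adjacent path $P(T;l,k+1)$. (Note that the $P(T;l,k-1)$ case you set up is already disposed of by Lemma \ref{recent} alone, since the most recent value of $P(T;l,k-1)$ relative to $(j,i)$ is strictly less than $T(a,b) \le T(j,i)$ automatically; no further argument is needed there.) The paper instead scans $k$ downward from $c_l$ and identifies the first path whose most recent value drops to $\le T(j,i)$. Both organizations are workable, but yours stalls at the key step.

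The gap is in your $P(T;l,k+1)$ case, and it is a real one: you assert that at the stage when $P(T;l,k)$ is computed, the bottom entry of column $x$ is ``$\ge T(x,y)$ by column-strictness.'' The inequality runs the other way. Since $(x,y)\in P(T;l,k+1)$, the bottom of column $x$ was at row $y$ when $P(T;l,k+1)$ was computed; after $P(T;l,k+1)$ is deleted the bottom moves up to $(x,y-1)$, so when $P(T;l,k)$ is computed the bottom of column $x$ has value $T(x,y-1) < T(x,y)$. With the correct inequality, the hypothesis $T(a,b) < T(x,y)\le T(j,i)$ yields no contradiction: the EWIS building $P(T;l,k)$ may legitimately skip column $x$ because the entry now sitting there is small, so ``$(a,b)$ is the immediate predecessor of $(j,i)$'' is not threatened. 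The missing idea is to look at where $(j,i+1)$ sits: since $(j,i)$ is the bottom of column $j$ when $P(T;l,k)$ is computed, $(j,i+1)$ must already have been deleted, by $P(T;l,k')$ with $k'\ge k+1$. If $k'=k+1$ then $(j,i+1)\in P(T;l,k+1)$ is itself the most recent location of $P(T;l,k+1)$ relative to $(j,i)$, and column-strictness gives $T(j,i+1)>T(j,i)$. If $k'>k+1$ then when $P(T;l,k+1)$ is computed the bottom of column $j$ is already $(j,i)$, and since $(j,i)\notin P(T;l,k+1)$, the EWIS for $P(T;l,k+1)$ arrived at column $j$ carrying a value $>T(j,i)$, which is then the most recent value relative to $(j,i)$. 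Either way the bound $>T(j,i)$ holds, and this is precisely the observation the paper uses (phrased as ``compute and delete until $(j,i+1)$ has been deleted'').
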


\begin{proof}As $k$ decrements from $c_l$, compute and delete the $P(T;l,k)$ until $(j, i+1)$ has been deleted.  The column-strict condition on $T$ and the definition of EWIS imply that the most recent values relative to $(j,i)$ from these paths are larger than $T(j,i)$.  The definition of EWIS also guarantees that the remaining paths will not append $(j,i)$ if their most recent value relative to $(j,i)$ is larger than $T(j,i)$.  Continue to compute and delete the scanning paths until reaching the first path whose most recent value relative to $(j,i)$ is less than or equal to $T(j,i)$.  At least one such path must exist, namely the path that contains $(j-1,i)$.  Lemma \ref{recent} implies that this most recent value relative to $(j,i)$ is the largest such value.  Clearly this EWIS will choose $T(j,i)$ and so this path will append $(j,i)$.  \end{proof}

\section{Applying the alcove model in type A to semistandard Young tableaux}

The ``filling map'' from the type A specialization of the alcove model is presented in Section 3 of \cite{Le2}.  Its inverse inputs a tableau $T$ of shape $\lambda$ and outputs a saturated chain in the Bruhat order.  We are most interested in the final permutation in the chain, which will be denoted $\pi_T$.  This inverse procedure consists of repeated application of the ``greedy algorithm'' described in Algorithm 4.9 of \cite{Le2} to the locations in $T$ in a certain order.  The author is indebted to Lenart for explaining how to use Algorithm 4.9 to describe the inverse of the filling map.  Here we provide the details of a shortened version of this procedure to see how the values in $T$ produce $\pi_T$.  The distinction between it and the full version will be discussed below.

Fix $\lambda \in \Lambda_n^+$ and $T \in \mathcal{T}_\lambda$.  The following inverse procedure will produce one permutation for each location in $\lambda$ between $(1,1)$ and $(\lambda_1, 1)$ inclusive in the reading order.  The locations are indicated as superscripts.  As the locations advance through $\lambda$, the permutations increase in the Bruhat order.  The first permutation is $\pi^{(1,1)} := (\pi_1^{(1,1)}, ... , \pi_n^{(1,1)})$ whose first $c_1$ entries are the values in $C_1$ (maintaining the increasing order), and whose final $n - c_1$ entries are the remaining values of $[n]$ (also in increasing order).  As $(j,i)$ advances from $(2,c_2)$ to $(\lambda_1,1)$, the procedure produces $\pi^{(j,i)}$ from $\pi^{(j,i+1)}$ based on the relationship between $C_{j-1}$ and $C_j$.  More specifically, the procedure uses $\pi^{(j,i+1)}$ to produce a permutation whose $i^{th}$ entry is $T(j,i)$, without changing any other entries from $\pi^{(j, i+1)}$ whose index is less than or equal to $c_j$.  From this we see that at an arbitrary location $(a,b) > (1,1)$, the permutation $\pi^{(a,b)}$ has $\pi^{(a,b)}_k = T(a-1,k)$ for $1 \leq k < b$, and $\pi^{(a,b)}_k = T(a,k)$ for $b \leq k \leq c_a$.  The procedure to produce $\pi^{(j,i)}$ from $\pi^{(j,i+1)}$ is as follows:

Since $T$ is semistandard, we have $T(j-1,i) \leq T(j, i)$.  If $T(j-1,i) = T(j,i)$, set $\pi^{(j,i)} = \pi^{(j,i+1)}$.  Otherwise, the semistandardness of $T$ and the previous paragraph imply that $T(j,i) = \pi^{(j,i+1)}_k$ for some $k > c_j$.  In this case, perform the following greedy algorithm:  Initialize the index $i_0 := i$, so we have $\pi^{(j,i+1)}_{i_0} = T(j-1,i)$.  Find the smallest index $i_1 > c_j$ such that $\pi^{(j,i+1)}_{i_0} < \pi^{(j,i+1)}_{i_1} \leq T(j,i)$.  If $\pi^{(j,i+1)}_{i_1} < T(j,i)$, find the smallest index $i_2$ such that $\pi^{(j,i+1)}_{i_1} < \pi^{(j,i+1)}_{i_2} \leq T(j,i)$.  Repeatedly obtain such indices until finding $i_m$ such that $\pi^{(j,i+1)}_{i_1} < ... < \pi^{(j,i+1)}_{i_m} = T(j,i)$. Then set $\pi^{(j,i)}_{i_x} := \pi^{(j,i+1)}_{i_{x-1}}$ for $1 \leq x \leq m$ and $\pi^{(j,i)}_{i_0} := \pi^{(j,i+1)}_{i_m}$.  For $1 \leq a \leq n$ such that $a \neq i_x$ for any $0 \leq x \leq m$, set $\pi^{(j,i)}_a := \pi^{(j,i+1)}_a$.  This completes the creation of $\pi^{(j,i)}$ and we say that the greedy algorithm has been executed at $(j,i)$.  Once the algorithm has been executed at $(\lambda_1,1)$, our final permutation $\pi^{(\lambda_1,1)} =: \pi_T$ has been produced.

For an example, refer to Figure 1 below.  Let $T$ be the first tableau in the figure.  The 10 (= 1 + 4 + 3 + 2) permutations displayed vertically are $\pi^{(1,1)}, \pi^{(2,4)}, \pi^{(2,3)},  ... , \pi^{(4,1)} = \pi_T$.  An asterisk on an entry in $\pi^{(j,i+1)}$ indicates that it is chosen during the production of $\pi^{(j,i)}$.

\begin{figure}[h!]
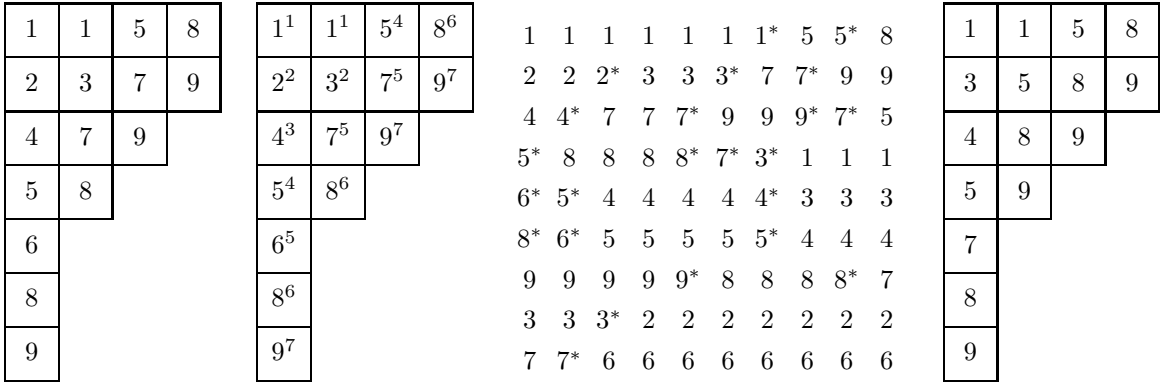

\ytableausetup{boxsize = 2em}
$$\begin{ytableau}
1 & 1 & 5 & 8  \\
2 & 3 & 7 & 9 \\
4 & 7 & 9 \\
5 & 8\\
6\\
8 \\
9 \\
\end{ytableau}\hspace{5mm}
\begin{ytableau}
1^1 & 1^1 & 5^4 & 8^6  \\
2^2 & 3^2 & 7^5 & 9^7 \\
4^3 & 7^5 & 9^7 \\
5^4 & 8^6\\
6^5\\
8^6 \\
9^7 \\
\end{ytableau}\hspace{5mm}
\ytableausetup{boxsize = 1.5 em}
\begin{ytableau}
\none[1]\\
\none[2]\\
\none[4]\\
\none[5^*]\\
\none[6^*]\\
\none[8^*]\\
\none[9]\\
\none[3]\\
\none[7]\\
\end{ytableau}
\begin{ytableau}
\none[1]\\
\none[2]\\
\none[4^*]\\
\none[8]\\
\none[5^*]\\
\none[6^*]\\
\none[9]\\
\none[3]\\
\none[7^*]\\
\end{ytableau}
\begin{ytableau}
\none[1]\\
\none[2^*]\\
\none[7]\\
\none[8]\\
\none[4]\\
\none[5]\\
\none[9]\\
\none[3^*]\\
\none[6]\\
\end{ytableau}
\begin{ytableau}
\none[1]\\
\none[3]\\
\none[7]\\
\none[8]\\
\none[4]\\
\none[5]\\
\none[9]\\
\none[2]\\
\none[6]\\
\end{ytableau}
\begin{ytableau}
\none[1]\\
\none[3]\\
\none[7^*]\\
\none[8^*]\\
\none[4]\\
\none[5]\\
\none[9^*]\\
\none[2]\\
\none[6]\\
\end{ytableau}
\begin{ytableau}
\none[1]\\
\none[3^*]\\
\none[9]\\
\none[7^*]\\
\none[4]\\
\none[5]\\
\none[8]\\
\none[2]\\
\none[6]\\
\end{ytableau}
\begin{ytableau}
\none[1^*]\\
\none[7]\\
\none[9]\\
\none[3^*]\\
\none[4^*]\\
\none[5^*]\\
\none[8]\\
\none[2]\\
\none[6]\\
\end{ytableau}
\begin{ytableau}
\none[5]\\
\none[7^*]\\
\none[9^*]\\
\none[1]\\
\none[3]\\
\none[4]\\
\none[8]\\
\none[2]\\
\none[6]\\
\end{ytableau}
\begin{ytableau}
\none[5^*]\\
\none[9]\\
\none[7^*]\\
\none[1]\\
\none[3]\\
\none[4]\\
\none[8^*]\\
\none[2]\\
\none[6]\\
\end{ytableau}
\begin{ytableau}
\none[8]\\
\none[9]\\
\none[5]\\
\none[1]\\
\none[3]\\
\none[4]\\
\none[7]\\
\none[2]\\
\none[6]\\
\end{ytableau}\hspace{5mm}
\ytableausetup{boxsize = 2em}
\begin{ytableau}
1 & 1 & 5 & 8  \\
3 & 5 & 8 & 9 \\
4 & 8 & 9 \\
5 & 9\\
7\\
8 \\
9 \\
\end{ytableau}$$
\caption{Example tableaux and permutations}
\end{figure}

\begin{lem}\label{amcliffincrease}Fix $1 \leq h \leq d+1$.  For $(j,i) \geq (\beta_{h},1)$ we have $\pi^{(j,i)}_{\zeta_{h-1} + 1} < ... < \pi^{(j,i)}_{\zeta_h}$.  \end{lem}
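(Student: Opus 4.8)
The plan is to fix $h$ and induct on the location $(j,i)$, running over the locations with $(j,i)\ge(\beta_h,1)$ in the reading order; the base case is $(j,i)=(\beta_h,1)$, and in the inductive step we pass from $\pi^{(j,i+1)}$ to $\pi^{(j,i)}$ by at most one application of the greedy algorithm.

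For the base case I would use the explicit form of $\pi^{(\beta_h,1)}$ recalled just before the statement. If $\beta_h>1$, its first $c_{\beta_h}=\zeta_h$ entries are the entries of the column $C_{\beta_h}$ of $T$ in increasing order, so positions $\zeta_{h-1}+1,\dots,\zeta_h$, a sub-interval of $[1,\zeta_h]$, carry a strictly increasing string; if $\beta_h=1$ (which forces $h=d$, or $h=d+1$ with $\zeta_h=n$) one instead uses that the first $c_1$ entries of $\pi^{(1,1)}$ are $C_1$ increasing and the last $n-c_1$ entries are $[n]\setminus C_1$ increasing. I would also record the elementary point that $(j,i)>(\beta_h,1)$ forces $j>\beta_h$, hence $c_j<\zeta_h$, hence $c_j\le\zeta_{h-1}$ (using $\zeta_{(d+1)-1}=\zeta_d=c_1$ when $h=d+1$); thus throughout the inductive step the block $[\zeta_{h-1}+1,\zeta_h]$ lies strictly to the right of the column height $c_j$. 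In particular, when $\beta_h=\lambda_1$, i.e. $h=1$, there is no inductive step.

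For the inductive step write $\pi=\pi^{(j,i+1)}$, $\pi'=\pi^{(j,i)}$, and assume $\pi_{\zeta_{h-1}+1}<\dots<\pi_{\zeta_h}$. If no greedy step occurs at $(j,i)$, then $\pi'=\pi$ and we are done, so suppose the greedy algorithm runs, producing $i_0=i$ and $i_1,\dots,i_m$. I would first verify the routine facts, from the description of $\pi$ on positions $\le c_j$ together with the column-strictness of $T$, that $i_1<\dots<i_m$, that $i_1,\dots,i_m>c_j$, and that $\pi_{i_0}<\pi_{i_1}<\dots<\pi_{i_m}=T(j,i)$. Since $i_0=i\le c_j\le\zeta_{h-1}$, the index $i_0$ lies outside the block, so the block can only be disturbed at the $i_x$ with $x\ge1$. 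If none of these lie in the block, $\pi'$ and $\pi$ agree on the block and we are done; otherwise, since the $i_x$ increase and the block is an interval, those lying in the block form a contiguous stretch $i_p,\dots,i_q$ of the list. The key sub-step is to show $i_p,\dots,i_q$ are \emph{consecutive integers}: if $x<q$ and $i_x$ lies in the block, then so does $i_x+1$, so by the inductive hypothesis and the greedy bound $\pi_{i_x}<\pi_{i_x+1}\le\pi_{i_{x+1}}\le T(j,i)$, which makes $i_x+1$ a legal candidate for $i_{x+1}$; minimality of $i_{x+1}$ then forces $i_{x+1}=i_x+1$.

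Granting this, the greedy cyclic shift, restricted to the block, fixes every block position except the consecutive run $i_p,\dots,i_q$, on which the sorted values $\pi_{i_p}<\dots<\pi_{i_q}$ are replaced, in order, by $\pi_{i_{p-1}},\pi_{i_p},\dots,\pi_{i_{q-1}}$. This string is strictly increasing because $\pi_{i_{p-1}}<\pi_{i_p}$; it meshes with the fixed part above because $\pi'_{i_q}=\pi_{i_{q-1}}<\pi_{i_q}<\pi_{i_q+1}=\pi'_{i_q+1}$, and with the fixed part below because, when $i_p-1$ lies in the block, it satisfies $i_p-1>c_j$ and was passed over by the greedy search, so its value does not lie in $(\pi_{i_{p-1}},T(j,i)]$; being smaller than $\pi_{i_p}\le T(j,i)$ it is therefore at most $\pi_{i_{p-1}}$, and strictly less since $\pi$ is injective. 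Hence $\pi'_{\zeta_{h-1}+1}<\dots<\pi'_{\zeta_h}$, which closes the induction. I expect the bookkeeping in this last step, namely the ``consecutive integers'' claim and the verification that the shifted run fits monotonically between its fixed neighbours, to be the only genuine obstacle; the base case and the ``no greedy step'' case are immediate from the definitions recalled above.
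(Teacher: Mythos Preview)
Your proposal is correct and follows essentially the same approach as the paper: induction on the location with the same base case, and in the inductive step the same key observation that the greedy indices falling in the block $[\zeta_{h-1}+1,\zeta_h]$ must be consecutive integers (your $i_p,\dots,i_q$ are the paper's $x,x+1,\dots,y$, and your $i_{p-1}$ is the paper's $z$). Your write-up is somewhat more explicit than the paper's in verifying $i_1<\dots<i_m$ and in checking the boundary meshing at $i_p-1$, but the argument is the same.
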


\begin{proof}For $1 \leq h \leq d$, it is easy to see that $\pi^{(\beta_h,1)}_x = T(\beta_h,x)$ for $1 \leq x \leq \zeta_h$.  So the column-strict condition on $T$ implies the result at $(\beta_h, 1)$.  Note that the $h=1$ case has been completely proven, since $\beta_1 = \lambda_1$.  When $h = d+1$, the definition of $\pi^{(1,1)}$ implies the result at $(1,1)$.  Now for any $1 < h \leq d+1$, assume that $\pi^{(j,i+1)}_{\zeta_{h-1} + 1} < ... < \pi^{(j,i+1)}_{\zeta_h}$ where $(j,i+1) \geq (\beta_h,1)$.  Let $x$ be the smallest index and $y$ the largest index satisfying $\zeta_{h-1}+1 \leq x \leq y \leq \zeta_h$ such that the greedy algorithm executed at $(j,i)$ chooses $\pi^{(j,i+1)}_x$ and $\pi^{(j,i+1)}_y$.  If no such indices exist the result follows trivially.  Otherwise, let $i \leq z \leq \zeta_{h-1}$ be the largest index less than $x$ such that $\pi^{(j,i+1)}_z$ is chosen by the greedy algorithm at $(j,i)$.  Clearly $\pi^{(j,i+1)}_{x-1} \leq \pi^{(j,i+1)}_z < \pi_x^{(j,i+1)}$, with equality in the former if and only if $z = x-1$.  If $x = y$, then $\pi^{(j,i+1)}_{x-1} \leq \pi^{(j,i+1)}_z < \pi^{(j,i+1)}_{x+1}$.  (If $x = \zeta_h$ the latter should be ignored.)  When $\pi^{(j,i)}$ is created $\pi^{(j,i)}_x = \pi^{(j,i+1)}_z$ and $\pi^{(j,i)}_a = \pi^{(j,i+1)}_a$ for all other $\zeta_{h-1}+1 \leq a \leq \zeta_h$, and so the result follows.

If $x < y$, the induction hypothesis implies that all of $\pi^{(j,i+1)}_x, \pi^{(j,i+1)}_{x+1}, ... , \pi^{(j,i+1)}_{y}$ are chosen by the greedy algorithm at $(j,i)$.  Then $\pi^{(j,i)}_x = \pi^{(j,i+1)}_z$, and $\pi^{(j,i)}_b = \pi^{(j,i+1)}_{b-1}$ for $x+1 \leq b \leq y$.  Further we have $\pi^{(j,i)}_a = \pi^{(j,i+1)}_a$ for $\zeta_{h-1}+1 \leq a < x$ and $y < a \leq \zeta_h$.  Since $\pi^{(j,i+1)}_{x-1} \leq \pi^{(j,i+1)}_z < \pi^{(j,i+1)}_x < ... < \pi^{(j,i+1)}_{y-1} < \pi^{(j,i+1)}_{y+1}$, the result follows for $\pi^{(j,i)}$.  \end{proof}

At $(j,i) = (\lambda_1, 1)$, the statement of the lemma for $1 \leq h \leq d+1$ says that:

\begin{cor}\label{pisnlambda} For any $T \in \mathcal{T}_\lambda$, we have $\pi_T \in S_n^\lambda$. \end{cor}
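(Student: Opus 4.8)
The plan is to simply read off the conclusion from Lemma \ref{amcliffincrease} evaluated at the terminal location. Recall that $S_n^\lambda$ is defined to be the set of $\phi \in S_n$ with $\phi_{\zeta_{h-1}+1} < \cdots < \phi_{\zeta_h}$ for all $1 \leq h \leq d+1$. The final permutation produced by the inverse procedure is $\pi_T = \pi^{(\lambda_1,1)}$, and the location $(\lambda_1,1)$ is the largest location in the reading order, so in particular $(\lambda_1,1) \geq (\beta_h,1)$ for every $1 \leq h \leq d+1$. Hence Lemma \ref{amcliffincrease} applies with $(j,i) = (\lambda_1,1)$ for each such $h$, giving $\pi^{(\lambda_1,1)}_{\zeta_{h-1}+1} < \cdots < \pi^{(\lambda_1,1)}_{\zeta_h}$ simultaneously for all $h$.

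The only point that deserves a word of care is why $(\lambda_1,1) \geq (\beta_h,1)$ holds for all $h$. By definition $\beta_h$ is a column index of $\lambda$, so $1 \leq \beta_h \leq \lambda_1$, and the reading order compares $(\beta_h,1)$ with $(\lambda_1,1)$ by first column index; since $\beta_h \leq \lambda_1$ we get $(\beta_h,1) \leq (\lambda_1,1)$, as needed. (For $h = d+1$ one has $\beta_{d+1} = 1 \leq \lambda_1$ as well.) Thus no row-index comparison is even required.

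I would therefore state the proof as a single sentence: set $(j,i) = (\lambda_1,1)$ in Lemma \ref{amcliffincrease} and range over $1 \leq h \leq d+1$ to obtain all the defining inequalities of $S_n^\lambda$ for $\pi_T$. There is no genuine obstacle here — the corollary is immediate from the lemma, which is precisely why the excerpt presents it with the remark ``the statement of the lemma for $1 \leq h \leq d+1$ says that.''
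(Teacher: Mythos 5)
Your proof is correct and takes exactly the same approach as the paper: the paper presents the corollary as an immediate consequence of Lemma \ref{amcliffincrease} evaluated at $(j,i)=(\lambda_1,1)$ for all $1 \leq h \leq d+1$, which is precisely what you do. Your extra remark verifying $(\beta_h,1) \leq (\lambda_1,1)$ is a harmless bit of added care.
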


The remaining paragraphs in this section describe the connection between the above method and the full version of the inverse of the filling map.  First note that if $m > 1$ when the greedy algorithm is executed at $(j,i)$, then there are permutations between $\pi^{(j,i+1)}$ and $\pi^{(j,i)}$ in the Bruhat order.  The full version of the inverse of the filling map yields $m-1$ permutations between $\pi^{(j,i+1)}$ and $\pi^{(j,i)}$:  After obtaining $\pi^{(j,i+1)}$, produce $\pi^{(j,i+1;1)}$ by interchanging only $\pi^{(j,i+1)}_{i_0}$ and $\pi^{(j,i+1)}_{i_1}$ and leaving the remaining values unchanged.  The next permutation $\pi^{(j,i+1;2)}$ is produced by interchanging $\pi^{(j,i+1;1)}_{i_0}$ with $\pi^{(j,i+1;1)}_{i_2}$ (and leaving the rest unchanged).  This process repeats until $\pi^{(j,i+1;m-1)}_{i_0}$ is interchanged with $\pi^{(j,i+1;m-1)}_{i_m}$ to produce $\pi^{(j,i+1;m)} = \pi^{(j,i)}$.  The resulting chain $\pi^{(j,i+1)} < \pi^{(j,i+1;1)} < ... < \pi^{(j,i+1;m-1)} < \pi^{(j,i)}$ is saturated, as is the chain from $\pi^{(1,1)}$ to $\pi_T$ obtained in this fashion.

For an example, the full version of the inverse of the filling map produces 2 permutations between the second and third permutations ($\pi^{(2,4)}$ and $\pi^{(2,3)}$) displayed in Figure 1.  They are $\pi^{(2,4;1)} = (1,2,5,8,4,6,9,3,7)$ and $\pi^{(2,4;2)} = (1,2,6,8,4,5,9,3,7)$.

In addition, the full version of the inverse of the filling map begins with the identity permutation which we will denote $\pi^{(0,0)}$.  Let $C_0$ be a column of length $n$ whose value in row $i$ is $i$.  Then the process from the previous paragraph is executed at the locations $(1, c_1)$ through $(1,1)$ to obtain a saturated chain from $\pi^{(0,0)}$ to $\pi^{(1,1)}$.  Let $B_T$ denote the result of combining these chains to produce a saturated chain from $\pi^{(0,0)}$ to $\pi_T$.  Then $B_T$ is the saturated chain produced by the full version of the inverse of the filling map in the alcove model specialization in type A.

Lastly, for $c \in [n-1]$ define $\Gamma(c) := ( (c, c+1), (c, c+2), \dots , (c, n), (c-1, c+1), \dots , (c-1, n), \dots , (1, c+1), \dots , (1, n) )$.  Then define $\Gamma(\lambda)$ to be the concatenation of $\Gamma(c_1), \Gamma(c_2), \dots , \Gamma(c_{\lambda_1})$.  For $T \in \mathcal{T}_\lambda$, the set $\Gamma(\lambda)$ is the ordered list of all transpositions that may be applied to produce the permutations in $B_T$.  Let $x$ denote the number of elements in $\Gamma(\lambda)$, and consider the list to be indexed from 1 to $x$.  Each time a transposition in $\Gamma(\lambda)$ is applied during the creation of $B_T$, underline it.  Define $J_T \subseteq [x]$ to be the indices of the underlined transpositions.  Then $J_T$ is called an \emph{admissible subset}.  (The original \cite{Le1} definition for a subset of $[x]$ to be admissible is that its corresponding chain in the Bruhat order is saturated, which is a consequence of the construction above.)  Also note that given an admissible subset $J$, one can reverse this procedure to find its corresponding tableau $T_J$.  That is: given an admissible subset $J$, form the Bruhat chain corresponding to $J$ and let $T_J$ be the tableau whose $j^{th}$ column contains the first $c_j$ entries of $\pi^{(j,1)}$  for $1 \leq j \leq \lambda_1$.  See Section 3 of \cite{Le2} for full details.

\section{Equivalence of the scanning method and the type A alcove model}

Given $\lambda \in \Lambda_n^+$ and $T \in \mathcal{T}_\lambda$, fix a column $1 \leq l < \lambda_1$ and a location $(l,1) \leq (j,i) < (\lambda_1,1)$.  Consider the paths $P(T;l,k)$ originating in the $l^{th}$ column.  Let $U(T;l,j,i)$ denote the set of the most recent values in these paths $P(T;l,k)$ relative to $(j,i-1)$.  Let $U(T;l,\lambda_1,1)$ be the set of final values in these $P(T;l,k)$; these are the values $S(T;l,k)$ for $1 \leq k \leq c_l$.  Note that $T(j,i) \in U(T;l,j,i)$ for all $(j,i) > (l,1)$.  For the rightmost column only the final location is of interest.  Let $U(T;\lambda_1, \lambda_1, 1)$ be the set of values in the rightmost column of $T$ (and hence in $S(T)$ ).  For an example, let $T$ be the first tableau in Figure 1.  Let $l = 1$ and $(j,i) = (3,2)$.  Then $U(T;1,3,2) = \{ 1,3,4,5,7,8,9 \}$, based on the locations $(2, 1), (2,2), (1,3), (1,4), (3,2), (2,4), $ and $(3,3)$. For $T \in \mathcal{T}_\lambda$, let $\pi^{(1,1)} \leq ... \leq \pi^{(\lambda_1, 1)} = \pi_T$ be the (unsaturated) chain produced by the alcove model in Section 4.  For $(1,1) \leq (j,i) \leq (\lambda_1,1)$ and $1 \leq y \leq n$, define $\Pi_y^{(j,i)} := \{ \pi_1^{(j,i)}, ... , \pi_y^{(j,i)} \}$.

\begin{prop}\label{zetahequivalence}Fix $1 \leq h \leq d$.  For $(j,i) \geq (\beta_h,1)$, we have $\Pi_{\zeta_h}^{(j,i)} = U(T;\beta_h,j,i)$. \end{prop}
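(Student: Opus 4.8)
The plan is to prove this by induction on the reading-order position of $(j,i)$, starting from the base case $(j,i) = (\beta_h, 1)$. At the base case, I would observe (exactly as in the proof of Lemma~\ref{amcliffincrease}) that $\pi^{(\beta_h,1)}_x = T(\beta_h,x)$ for $1 \leq x \leq \zeta_h$, so $\Pi_{\zeta_h}^{(\beta_h,1)}$ is precisely the set of values in column $C_{\beta_h}$ of $T$. On the other side, each scanning path $P(T;\beta_h,k)$ originates at $(\beta_h,k)$, so its most recent value relative to $(\beta_h, 0) = (\beta_h + 1, c_{\beta_h+1})$, i.e. relative to the location immediately before $(\beta_h,1)$ in the reading order, is simply $T(\beta_h,k)$. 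Hence $U(T;\beta_h,\beta_h,1)$ is also the set of values in $C_{\beta_h}$, and the base case holds. (A small note: when $\beta_h = \lambda_1$ one must check the boundary conventions for the definition of $U$, but the $h=1$ situation only arises when $\beta_1 = \lambda_1$, and there $\zeta_1 = c_{\lambda_1}$, so again both sides equal the value set of the last column.)

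For the inductive step, suppose $\Pi_{\zeta_h}^{(j,i+1)} = U(T;\beta_h,j,i)$ and I want $\Pi_{\zeta_h}^{(j,i)} = U(T;\beta_h,j,i-1)$, where I am advancing the location by one step. The key point is to compare how the two sides change under this single advance. On the alcove side, either the greedy algorithm at $(j,i)$ does nothing (when $T(j-1,i) = T(j,i)$), or it performs a cyclic rearrangement of the entries $\pi^{(j,i+1)}_{i_0}, \pi^{(j,i+1)}_{i_1}, \dots, \pi^{(j,i+1)}_{i_m}$ with $i_0 = i \leq \zeta_{h}$ (since $(j,i+1) \geq (\beta_h,1)$ forces $i < c_j \le \zeta_h$, using that $C_j$ has the length $\zeta_h$ associated to $\beta_h$... here I would need to be careful and instead argue that $i \le c_j \le \zeta_h$ because column $j$ sits weakly to the left of $\beta_h$ when $(j,i) \ge (\beta_h,1)$ — wait, it sits weakly to the \emph{right}; let me instead use that the column containing $(j,i)$ has length at most $\zeta_h$) and all of $i_1, \dots, i_m$ exceeding $c_j$. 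The new value entering position $i_0 \le \zeta_h$ is $T(j,i)$; the value leaving the first $\zeta_h$ positions is $\pi^{(j,i+1)}_{i_0} = T(j-1,i)$, \emph{provided} some $i_x$ with $1 \le x \le m$ satisfies $i_x \le \zeta_h$. So the symmetric difference of $\Pi_{\zeta_h}^{(j,i)}$ and $\Pi_{\zeta_h}^{(j,i+1)}$ is either empty or exactly $\{T(j-1,i), T(j,i)\}$, with $T(j-1,i)$ removed and $T(j,i)$ added; and I must pin down precisely when the removal happens. On the scanning side, advancing the reference location from $(j,i)$ to $(j,i-1)$ adds the single box $(j,i-1)$ — wait, the reference is $(j,i-1)$ versus $(j,i)$ in terms of the ``relative to'' truncation, so actually the box $(j,i)$ itself becomes newly available to the paths; by Lemma~\ref{recentpath}, $(j,i)$ gets appended to exactly the path from column $\beta_h$ whose most recent value relative to $(j,i)$ is the largest one $\leq T(j,i)$, replacing that path's old most recent value with $T(j,i)$. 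I must show this old value is exactly $T(j-1,i)$ (using $\pi^{(j,i+1)}_{i_0} = T(j-1,i) \in U(T;\beta_h,j,i)$ via the induction hypothesis, together with Lemma~\ref{recent} which gives the ordering of the set $U$), and that the case "no change on the alcove side" matches the case "the path already had most recent value $T(j,i)$," which happens exactly when $T(j-1,i) = T(j,i)$.

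The main obstacle I anticipate is the careful bookkeeping in this matching: showing that the unique scanning path that receives $(j,i)$ corresponds to the unique alcove position $i_0$ vacated, i.e. that ``largest most recent value $\le T(j,i)$'' on the scanning side lines up with ``the entry $T(j-1,i)$ that gets bumped'' on the alcove side. This requires combining three facts — that $T(j-1,i) \in \Pi_{\zeta_h}^{(j,i+1)} = U(T;\beta_h,j,i)$ by induction, that $T(j-1,i) \le T(j,i)$ by semistandardness, and that $T(j-1,i)$ is the \emph{largest} such element because the greedy algorithm picks the smallest index $i_1 > c_j$ with value just above $\pi^{(j,i+1)}_{i_0}$, so no value strictly between $T(j-1,i)$ and $T(j,i)$ sits in $U(T;\beta_h,j,i)$ — and translating that last point through the bijection between $U$-values and path labels given by Lemma~\ref{recent}. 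I would also need to handle the subcase where some chosen indices $i_x$ lie in $(\zeta_h, \zeta_{h'}]$ for a larger block, so that the count of chosen indices inside $\{1,\dots,\zeta_h\}$ is exactly two (one is $i_0$, one is some $i_x$) rather than more — this is where Lemma~\ref{amcliffincrease}'s monotonicity of $\pi^{(j,i+1)}$ restricted to each block is the crucial ingredient, since it forces the chosen indices within a block to form a contiguous run, hence the boundary index $\zeta_h$ is crossed at most once.
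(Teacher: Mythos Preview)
Your inductive framework is the same as the paper's, but the key computation in the inductive step is wrong.  You assert that the value leaving $\Pi_{\zeta_h}$ when the greedy algorithm is executed at $(j,i)$ is $\pi^{(j,i+1)}_{i_0}=T(j-1,i)$.  That is only true when \emph{none} of the indices $i_1,\dots,i_m$ lie in the interval $(c_j,\zeta_h]$.  In general, since $c_j<i_1<\dots<i_m$, several of the $i_x$ can sit in $(c_j,\zeta_h]$; if $i_p$ is the largest such index then the value that actually leaves $\Pi_{\zeta_h}$ is $\pi^{(j,i+1)}_{i_p}$, not $\pi^{(j,i+1)}_{i_0}$.  (Concretely: take $\zeta_h=5$, $c_j=2$, $i=1$, $\pi^{(j,i+1)}=(1,T(j,2),4,7,9,10,\dots)$ with $T(j,i)=10$; then $i_0=1,i_1=3,i_2=4,i_3=5,i_4=6$ and the value leaving $\Pi_5$ is $9$, not $1$.)  For the same reason your characterization of the ``no change'' case as $T(j-1,i)=T(j,i)$ is wrong: $\Pi_{\zeta_h}$ is unchanged precisely when $i_m\le\zeta_h$, i.e.\ when $T(j,i)$ already lies in $\Pi_{\zeta_h}^{(j,i+1)}$, which can happen even when $T(j-1,i)<T(j,i)$.

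The fix, which is exactly what the paper does, is to identify the leaving value not as $T(j-1,i)$ but as the \emph{largest} element $\pi_b$ of $\Pi_{\zeta_h}^{(j,i+1)}$ that is strictly less than $T(j,i)$.  One checks via semistandardness that $b=i$ or $b>c_j$, so $\pi_b$ is indeed selected by the greedy algorithm, and that $b$ is the last chosen index $\le\zeta_h$; hence $\pi_b$ is the value that exits.  This description---``largest element of the set below $T(j,i)$''---is precisely what Lemma~\ref{recentpath} says about which scanning path absorbs $(j,i)$, so under the induction hypothesis the two sides change identically.  Your attempt to force $\pi_b=T(j-1,i)$ by arguing ``no value strictly between $T(j-1,i)$ and $T(j,i)$ sits in $U$'' cannot work, since the intermediate values $\pi^{(j,i+1)}_{i_1},\dots,\pi^{(j,i+1)}_{i_p}$ may all lie in $\Pi_{\zeta_h}^{(j,i+1)}=U$.  (Also, your induction hypothesis should read $\Pi_{\zeta_h}^{(j,i+1)}=U(T;\beta_h,j,i+1)$, not $U(T;\beta_h,j,i)$; recall $U(T;l,j,i)$ is defined relative to $(j,i-1)$.)
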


\begin{proof}It is easy to see that $\Pi_{\zeta_h}^{(\beta_h,1)} = \{ T(\beta_h, 1), ... , T(\beta_h, \zeta_h) \}$.  Further, the paths used to compute $U(T;\beta_h,\beta_h,1)$ contain only their initial locations from $C_{\beta_h}$.  So $U(T;\beta_h,\beta_h,1) = \{ T(\beta_h, 1), ... , T(\beta_h, \zeta_h) \}$, and the base case is established.  Again the $h=1$ case has been proven, so fix $1 < h \leq d$.  Let $(j,i+1) \geq (\beta_h,1)$ and assume that $\Pi_{\zeta_h}^{(j,i+1)} = U(T;\beta_h,j,i+1)$.  Let $\pi_i^{(j,i+1)} = \pi_{i_0}^{(j,i+1)} < \pi_{i_1}^{(j,i+1)} < ... < \pi_{i_m}^{(j,i+1)} = T(j,i)$ be the values chosen by the greedy algorithm at $(j,i)$.

If $\Pi_{\zeta_h}^{(j,i)} = \Pi_{\zeta_h}^{(j,i+1)}$, then $i_m \leq \zeta_h$ and $T(j,i) = \pi_a \in \Pi_{\zeta_h}^{(j,i+1)}$ for some $1 \leq a \leq \zeta_h$.  By the induction hypothesis there exists $T(l,k) \in U(T;\beta_h,j,i+1)$ such that $T(l,k) = \pi_a$.  By Lemma \ref{recentpath}, the path whose most recent value is $T(l,k)$ will append $(j,i)$.  So $T(l,k)$ is still the most recent value of this path, and the other paths are unchanged.  Thus $U(T;\beta_h,j,i) = U(T;\beta_h,j,i+1)$, and the induction hypothesis gives the result.

Assume $\Pi_{\zeta_h}^{(j,i)} \neq \Pi_{\zeta_h}^{(j,i+1)}$.  In this case $i_m > \zeta_h$, i.e. the value $T(j,i)$ is not in $\Pi_{\zeta_h}^{(j,i+1)}$.  The value of $\pi_i$ will change to $\pi_{i_m} = T(j,i)$, i.e. we have $T(j,i) \in \Pi_{\zeta_h}^{(j,i)}$.  Let $\pi_b$ denote the largest value in $\Pi_{\zeta_h}^{(j,i+1)}$ that is less than $T(j,i)$.  Recall that $\pi^{(j,i+1)}_k = T(j-1,k)$ for $1 \leq k \leq i$ and $\pi^{(j,i+1)}_k = T(j,k)$ for $i+1 \leq k \leq c_j$.  Thus the semistandardness of $T$ ensures that $b = i$ or $b > c_j$, and so $\pi_b$ will be chosen by the greedy algorithm.  By construction, the index $b$ is the largest index less than or equal to $\zeta_h$ that the greedy algorithm will choose.  In other words, the new index for the value $\pi_b$ in $\pi^{(j,i)}$ will be larger than $\zeta_h$.  Thus $\pi_b \notin \Pi_{\zeta_h}^{(j,i)}$.  In summary $\Pi_{\zeta_h}^{(j,i)} = (\hspace{1mm} \Pi_{\zeta_h}^{(j,i+1)} \backslash \{ \pi_b \} \hspace{1mm} ) \bigcup \hspace{1mm} \{ T(j,i) \}$.

By the induction hypothesis $\pi_b = T(l,k)$ for some $T(l,k) \in U(T;\beta_h,j,i+1)$.  By the construction of $\pi_b$, the value $T(l,k)$ is the largest value in $U(T;\beta_h,j,i+1)$ less than $T(j,i)$.  Again by Lemma \ref{recentpath}, the path whose most recent value is $T(l,k)$ will append $(j,i)$.  So $T(j,i)$ is now the most recent value of a path but $T(l,k)$ is not.  The remaining paths are unchanged.  In summary $U(T;\beta_h,j,i) = (\hspace{1mm} U(T;\beta_h,j,i+1) \backslash \{ T(l,k) \} \hspace{1mm}) \bigcup \hspace{1mm} \{ T(j,i) \}$.  Since $\pi_b = T(l,k)$, the induction hypothesis implies the result.  \end{proof}

For $1 \leq x \leq n$, write $\pi_{T_x}$ for the $x^{th}$ entry of $\pi_T$.  After the greedy algorithm has been executed at $(\lambda_1, 1)$ we have $\Pi_{\zeta_h}^{(\lambda_1, 1)} = \{ \pi_{T_1}, ... , \pi_{T_{\zeta_h}} \}$ for $1 \leq h \leq d$.  Also, the set $U(T;\beta_h,\lambda_1,1) = \{ S(T;\beta_h, 1) , ... , S(T;\beta_h, \zeta_h) \}$ for $1 \leq h \leq d$.

Recall that $\sigma_T$ is the permutation corresponding to $S(T)$.  Then for $\lambda \in \Lambda_n^+$ and $T \in \mathcal{T}_\lambda$, we have:

\begin{cor}\label{zetahequivalencecor}Fix $1 \leq h \leq d$.  Then $\{ \pi_{T_1} , ... , \pi_{T_{\zeta_h}} \} = \{ \sigma_{T_1}, ... , \sigma_{T_{\zeta_h}} \}$. \end{cor}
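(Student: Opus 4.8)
The plan is to obtain Corollary \ref{zetahequivalencecor} as an immediate consequence of Proposition \ref{zetahequivalence}, specialized to the terminal location $(j,i)=(\lambda_1,1)$, together with the bookkeeping identities recorded in the paragraph preceding the corollary. First I would invoke Proposition \ref{zetahequivalence} at $(\lambda_1,1)$, which is legitimate since $(\lambda_1,1)\geq(\beta_h,1)$ for every $1\leq h\leq d$, to obtain $\Pi_{\zeta_h}^{(\lambda_1,1)}=U(T;\beta_h,\lambda_1,1)$.

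Next I would unwind the two sides. On the left, by the definition of $\Pi$ we have $\Pi_{\zeta_h}^{(\lambda_1,1)}=\{\pi_1^{(\lambda_1,1)},\dots,\pi_{\zeta_h}^{(\lambda_1,1)}\}$, and since $\pi^{(\lambda_1,1)}=\pi_T$ this is exactly $\{\pi_{T_1},\dots,\pi_{T_{\zeta_h}}\}$. On the right, $U(T;\beta_h,\lambda_1,1)$ is the set of final values of the scanning paths originating in column $\beta_h$, namely $\{S(T;\beta_h,1),\dots,S(T;\beta_h,\zeta_h)\}$, which is the set of entries of the $\beta_h$-th column of $S(T)$, a column of length $\zeta_h$. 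Then I would use that $S(T)=Y_\lambda(\sigma_T)$ is a key: by the definition of $Y_\lambda$ together with the definition of $\sigma_T$, every column of $S(T)$ of length $\zeta_h$ has entry set $\{\sigma_{T_1},\dots,\sigma_{T_{\zeta_h}}\}$, and column $\beta_h$ is such a column. Chaining these three identifications through the equality from Proposition \ref{zetahequivalence} yields $\{\pi_{T_1},\dots,\pi_{T_{\zeta_h}}\}=\{\sigma_{T_1},\dots,\sigma_{T_{\zeta_h}}\}$.

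I do not expect a real obstacle here, since all of the substantive work has already been carried out in Proposition \ref{zetahequivalence}. The only point requiring minor care is matching indexing conventions — verifying that the entries $S(T;\beta_h,1),\dots,S(T;\beta_h,\zeta_h)$ of the $\beta_h$-th column of $S(T)$ really do form the set $\{\sigma_{T_1},\dots,\sigma_{T_{\zeta_h}}\}$. This follows because $\beta_h$ is by definition the rightmost column of length $\zeta_h$, because a key has a common entry set across all columns of a given length, and because $\sigma_T$ was defined precisely so that $S(T)=Y_\lambda(\sigma_T)$ and $\sigma_T\in S_n^\lambda$.
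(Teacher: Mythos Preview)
Your proposal is correct and is essentially the paper's own argument: the paper states the corollary without a separate proof, having set up precisely the identifications $\Pi_{\zeta_h}^{(\lambda_1,1)}=\{\pi_{T_1},\dots,\pi_{T_{\zeta_h}}\}$ and $U(T;\beta_h,\lambda_1,1)=\{S(T;\beta_h,1),\dots,S(T;\beta_h,\zeta_h)\}$ in the paragraph immediately preceding it, and then invoking Proposition~\ref{zetahequivalence} at $(\lambda_1,1)$ together with $S(T)=Y_\lambda(\sigma_T)$ from Section~3. Your only addition is spelling out why the entry set of column $\beta_h$ of $S(T)$ equals $\{\sigma_{T_1},\dots,\sigma_{T_{\zeta_h}}\}$, which the paper leaves implicit.
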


\begin{cor}\label{keyofpi} For any $T \in \mathcal{T}_\lambda$, we have $Y_\lambda(\pi_T) = S(T)$. \end{cor}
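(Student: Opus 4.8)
The plan is to obtain this immediately from Corollary~\ref{zetahequivalencecor}, using the fact that for a permutation $\phi \in S_n^\lambda$ the key $Y_\lambda(\phi)$ is completely determined by the initial-segment sets $\{\phi_1, \dots, \phi_{\zeta_h}\}$ for $1 \le h \le d$. First I would recall from Corollary~\ref{pisnlambda} that $\pi_T \in S_n^\lambda$, so that $Y_\lambda(\pi_T)$ is defined; by the definition of the $\lambda$-key in Section~2, its column of length $\zeta_h$ contains exactly $\pi_{T_1}, \dots, \pi_{T_{\zeta_h}}$ arranged in increasing order, for each $1 \le h \le d$. On the other side, $\sigma_T \in S_n^\lambda$ and $S(T) = Y_\lambda(\sigma_T)$, so the column of length $\zeta_h$ of $S(T)$ contains exactly $\sigma_{T_1}, \dots, \sigma_{T_{\zeta_h}}$ in increasing order.

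Next I would apply Corollary~\ref{zetahequivalencecor}: for each $1 \le h \le d$ one has $\{\pi_{T_1}, \dots, \pi_{T_{\zeta_h}}\} = \{\sigma_{T_1}, \dots, \sigma_{T_{\zeta_h}}\}$. Therefore, for every $h$ the column of length $\zeta_h$ in $Y_\lambda(\pi_T)$ and the column of length $\zeta_h$ in $S(T)$ list the same set of values in increasing order, hence are equal. Since $\lambda$ has columns only of lengths $\zeta_1 < \dots < \zeta_d$, matching these columns for every $h$ shows $Y_\lambda(\pi_T) = S(T)$.

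I do not anticipate a genuine obstacle here: the substance is already carried by Proposition~\ref{zetahequivalence} and the two corollaries that follow it. The only point worth stating carefully is that equality of the entry sets of the columns of length $\zeta_h$, for all $h$, forces the two keys to coincide --- which is immediate since in $Y_\lambda(\cdot)$ each column is by construction the increasing arrangement of its entry set, so those sets determine the tableau.
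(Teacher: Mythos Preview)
Your proposal is correct and matches the paper's intended argument: Corollary~\ref{keyofpi} is stated without proof in the paper precisely because it follows immediately from Corollary~\ref{zetahequivalencecor} together with the definition of $Y_\lambda(\cdot)$ and the identity $S(T)=Y_\lambda(\sigma_T)$, exactly as you outline. One minor remark: the map $Y_\lambda$ is defined in Section~2 for arbitrary $\phi\in S_n$, so invoking Corollary~\ref{pisnlambda} is not strictly needed to make sense of $Y_\lambda(\pi_T)$, though it does no harm.
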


Corollary \ref{keyofpi} is the most direct way that we know of to show that $S(T)$ is indeed a key.  Since both $\pi_T$ and $\sigma_T$ are in $S_n^\lambda$, we see that the final permutation produced by the inverse of Lenart's filling map \cite{Le2} is the permutation that corresponds to the scanning tableau:

\begin{thm}\label{finalequivalence}Let $T \in \mathcal{T}_\lambda$ and let $\pi_T$ be the final permutation in the saturated Bruhat chain produced by the inverse of the filling map in the type A specialization of the alcove model.  Then $\pi_T = \sigma_T$. \end{thm}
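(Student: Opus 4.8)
The plan is to deduce Theorem \ref{finalequivalence} almost immediately from Corollary \ref{zetahequivalencecor} together with the fact, already established, that both $\pi_T$ and $\sigma_T$ lie in $S_n^\lambda$. The main observation is that a permutation in $S_n^\lambda$ is completely determined by the unordered sets $\Pi_{\zeta_h} = \{\text{first } \zeta_h \text{ entries}\}$ for $1 \le h \le d$, because within each block of indices $\zeta_{h-1}+1, \dots, \zeta_h$ the entries must appear in increasing order. So if $\{\pi_{T_1}, \dots, \pi_{T_{\zeta_h}}\} = \{\sigma_{T_1}, \dots, \sigma_{T_{\zeta_h}}\}$ for every $h$, the two permutations must have the same one-rowed form.

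First I would record precisely the characterization of membership in $S_n^\lambda$: for $\phi \in S_n^\lambda$ and $1 \le h \le d$, the initial segment $(\phi_1, \dots, \phi_{\zeta_h})$ is the increasing rearrangement of the set $\{\phi_1, \dots, \phi_{\zeta_h}\}$, since $\phi$ is increasing on each of the intervals $(\zeta_{g-1}, \zeta_g]$ for $g \le h$. Next, I would invoke Corollary \ref{pisnlambda} to get $\pi_T \in S_n^\lambda$, and recall from Section 3 that $\sigma_T \in S_n^\lambda$ by construction. Then Corollary \ref{zetahequivalencecor} gives $\{\pi_{T_1}, \dots, \pi_{T_{\zeta_h}}\} = \{\sigma_{T_1}, \dots, \sigma_{T_{\zeta_h}}\}$ for $1 \le h \le d$. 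Applying the characterization to both sides, I conclude $(\pi_{T_1}, \dots, \pi_{T_{\zeta_h}}) = (\sigma_{T_1}, \dots, \sigma_{T_{\zeta_h}})$ for each $h$; taking $h = d$ pins down the first $\zeta_d$ entries, and since the remaining entries $\zeta_d + 1, \dots, n$ of any permutation in $S_n^\lambda$ are the complementary set in increasing order, the full one-rowed forms agree. Hence $\pi_T = \sigma_T$.

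Alternatively, and perhaps more cleanly, I would route the argument through Corollary \ref{keyofpi}: that corollary states $Y_\lambda(\pi_T) = S(T)$, while Section 3 notes $Y_\lambda(\sigma_T) = S(T)$, so $Y_\lambda(\pi_T) = Y_\lambda(\sigma_T)$. Since $Y_\lambda$ is a bijection from $S_n^\lambda$ to keys of shape $\lambda$ (stated in Section 2), and both $\pi_T$ and $\sigma_T$ lie in $S_n^\lambda$, injectivity of $Y_\lambda$ forces $\pi_T = \sigma_T$. This is the shortest path and is the one I would present in the paper.

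There is no real obstacle here; the substance has all been done in Proposition \ref{zetahequivalence} and its corollaries. The only point requiring a modicum of care is the reduction step — making sure that equality of the initial-segment \emph{sets} $\Pi_{\zeta_h}$ for $h = 1, \dots, d$ genuinely forces equality of permutations in $S_n^\lambda$, including the tail past $\zeta_d$. That follows because the tail is determined as the increasing arrangement of $[n] \setminus \Pi_{\zeta_d}$, which is the same set for both permutations. So the proof is essentially a one-line appeal to the bijectivity of $Y_\lambda$ on $S_n^\lambda$.
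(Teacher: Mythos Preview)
Your second route---via Corollary \ref{keyofpi}, the identity $Y_\lambda(\sigma_T)=S(T)$ from Section 3, and the bijectivity of $Y_\lambda$ on $S_n^\lambda$---is correct and is exactly the paper's argument (the paper states it in the single sentence preceding the theorem). One caution on your first route: for $\phi \in S_n^\lambda$ the tuple $(\phi_1,\dots,\phi_{\zeta_h})$ is \emph{not} in general the increasing rearrangement of $\{\phi_1,\dots,\phi_{\zeta_h}\}$, since only each block $(\phi_{\zeta_{g-1}+1},\dots,\phi_{\zeta_g})$ is increasing; the valid reconstruction recovers each block as the increasing arrangement of the successive difference $\{\phi_1,\dots,\phi_{\zeta_h}\}\setminus\{\phi_1,\dots,\phi_{\zeta_{h-1}}\}$, so equality of the sets for \emph{all} $h$ (which you have from Corollary \ref{zetahequivalencecor}) is what forces the tuples to agree, not just the case $h=d$.
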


Finally, since \cite{Wil} showed that $S(T) = R(T)$, we see that the key of the final permutation produced by the inverse of the filling map is the right key of $T$ introduced by Lascoux and Sch\"{u}tzenberger in \cite{LS}:

\begin{cor}\label{alcovetorightkey}Let $T \in \mathcal{T}_\lambda$.  Then $Y_\lambda(\pi_T) = R(T)$. \end{cor}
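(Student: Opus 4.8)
The plan is to obtain this corollary as an immediate consequence of Corollary \ref{keyofpi} together with the already-known identification of the scanning tableau with the right key. First I would recall from Corollary \ref{keyofpi} that $Y_\lambda(\pi_T) = S(T)$ for every $T \in \mathcal{T}_\lambda$; this is established above and needs no further argument. Then I would invoke Theorem 4.5 of \cite{Wil}, recalled at the beginning of Section 3, which states that the scanning tableau $S(T)$ equals the right key $R(T)$ of Lascoux and Sch\"{u}tzenberger. Concatenating the two equalities yields $Y_\lambda(\pi_T) = S(T) = R(T)$, which is exactly the assertion.

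Since both ingredients are already in hand, there is essentially no technical obstacle. The only point that needs care is purely a bookkeeping one: the object called the ``right key'' in \cite{Wil} must be literally the same $R(T)$ that was introduced via \cite{LS} and defined in Section 2, so that the two cited statements may be chained without any change of convention. This is precisely what the sentence quoting Theorem 4.5 of \cite{Wil} guarantees, so no additional reconciliation of notation is required.

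Finally, after the short proof I would add a sentence explaining its significance, since this corollary is a principal payoff of the paper: applying the $\lambda$-key construction $Y_\lambda$ to the terminal permutation $\pi_T$ of the alcove-model Bruhat chain $B_T$ recovers the classical right key, so $\pi_T$ plays in the alcove-path world precisely the role that $R(T)$ plays in the tableau world. I would keep this remark outside the proof environment proper.
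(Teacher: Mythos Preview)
Your proposal is correct and matches the paper's own argument exactly: the corollary is obtained by combining Corollary \ref{keyofpi}, $Y_\lambda(\pi_T)=S(T)$, with the identification $S(T)=R(T)$ from \cite{Wil}. The paper presents this as a one-line remark preceding the statement rather than inside a proof environment, but the content is identical.
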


\section{Demazure Character Consequences}

To keep this paper purely combinatorial we use Theorem 1 of \cite{RS1} to define the Demazure character $d_{\lambda, w}(x)$, also known as the ``key polynomial''.  For $\lambda \in \Lambda_n^+$ and $w \in S_n^\lambda$, define the set of tableaux $\mathcal{D}_{\lambda, w} := \{ \hspace{1mm} T \in \mathcal{T}_\lambda \hspace{1mm} | \hspace{1mm} R(T) \leq Y_\lambda(w) \hspace{1mm} \} = \{ \hspace{1mm} T \in \mathcal{T}_\lambda \hspace{1mm} | \hspace{1mm} S(T) \leq Y_\lambda(w) \hspace{1mm} \}$.  For a set of indeterminates $x_1, ... , x_n$, define $x^T := x_1^{b_1} \dots x_n^{b_n}$ where $b_i$ is the number of $i$'s in $T$.  The \emph{Demazure character for $\lambda$ and $w$} is $d_{\lambda, w}(x) := \sum x^T$, where the sum is over $\mathcal{D}_{\lambda, w}$.

For the connection to representation theory see, for example, the appendix of \cite{PW}.  In particular, all of the Demazure characters for $sl_n(\mathbb{C})$ can be produced in this fashion (with their exponents shifted to be integers).

Theorem \ref{demchar} below does follow immediately from Corollary \ref{alcovetorightkey}, but a stronger connection can be established:  In \cite{RS2} Reiner and Shimozono present a tableau description of a condition from \cite{LMS} to determine whether or not a given tableau is in $\mathcal{D}_{\lambda, w}$:  Fix $T \in \mathcal{T}_\lambda$ and denote its columns $C_1, ... , C_{\lambda_1}$.  Let ${(1^x)}$ be the partition consisting of $x$ ones, i.e. whose Young diagram is a single column of length $x$.  A \emph{defining chain for $T$} is a sequence of weakly increasing elements in the Bruhat order $w^1 \leq ... \leq w^{\lambda_1}$ such that $Y_{(1^{c_j})}(w^j) = C_j$ for $1 \leq j \leq \lambda_1$.  Then $T \in \mathcal{D}_{\lambda, w}$ if and only if there exists a defining chain $\{ w^j \}$ for $T$ with $w^{\lambda_1} \leq w$ in the Bruhat order.  Lemma 7 of \cite{RS2} is attributed to Deodhar and states that every $T$ has a minimal defining chain.  That is, there exists a defining chain $w^1 \leq ... \leq w^{\lambda_1}$ for $T$ such that if $v^1 \leq ... \leq v^{\lambda_1}$ is a defining chain for $T$ then $w^j \leq v^j$ for all $1 \leq j \leq \lambda_1$.  Thus $T \in \mathcal{D}_{\lambda, w}$ if and only if its minimal defining chain $\{ w^j \}$ has $w^{\lambda_1} \leq w$.

They then define the \emph{canonical lift} $w(T)$ of $T$ to be the shortest permutation in the Bruhat order such that $Y_{(1^{c_j})}(w(T))$ equals the $j^{th}$ column of $R(T)$ for $1 \leq j \leq \lambda_1$.  Let $\lambda^j$ and $T^j$ denote the results of removing $C_{j+1}, ...  , C_{\lambda_1}$ from $\lambda$ and $T$ respectively.  Lemma 8 of \cite{RS2} states that the minimal defining chain $w^1 \leq ... \leq w^{\lambda_1}$ for $T$ has $w^j$ equal to the canonical lift of $T^j$ for $1 \leq j \leq \lambda_1$.  Equivalently, in the minimal defining chain $w^j$ is the shortest permutation such that $Y_{\lambda^j}(w^j) = S(T^j)$ for $1 \leq j \leq \lambda_1$.

\begin{prop}\label{minimaldefchain}Fix $T \in \mathcal{T}_\lambda$.  Let $\{ w^j \}$ be its minimal defining chain, and let $\pi^{(1,1)} \leq ... \leq \pi^{(\lambda_1, 1)} = \pi_T$ be its Bruhat chain produced by the alcove model.  Then $w^j = \pi^{(j,1)}$ for $1 \leq j \leq \lambda_1$.\end{prop}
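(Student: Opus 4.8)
The plan is to prove both statements simultaneously by induction on $j$, using the characterization of the minimal defining chain from Lemma 8 of \cite{RS2}: $w^j$ is the shortest permutation such that $Y_{\lambda^j}(w^j) = S(T^j)$, i.e.\ $w^j \in S_n^{\lambda^j}$ and $Y_{\lambda^j}(w^j) = S(T^j)$. So it suffices to show that $\pi^{(j,1)} \in S_n^{\lambda^j}$ and that $Y_{\lambda^j}(\pi^{(j,1)}) = S(T^j)$ for each $j$.

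First I would observe that the alcove-model chain for $T$, restricted to the locations in $T^j$ (i.e.\ the portion from $\pi^{(1,1)}$ up through $\pi^{(j,1)}$), is \emph{exactly} the alcove-model chain for $T^j$: the greedy algorithm at any location $(a,b)$ with $a \le j$ depends only on $C_{a-1}$ and $C_a$, which are columns of both $T$ and $T^j$, and the initial permutation $\pi^{(1,1)}$ depends only on $C_1$. Hence $\pi^{(j,1)}$, computed inside $T$, equals $\pi_{T^j}$, the final permutation of the alcove-model chain for $T^j$. With this identification in hand, the two things to check become statements purely about $T^j$: namely $\pi_{T^j} \in S_n^{\lambda^j}$ and $Y_{\lambda^j}(\pi_{T^j}) = S(T^j)$. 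But these are precisely Corollary \ref{pisnlambda} and Corollary \ref{keyofpi} applied to the tableau $T^j \in \mathcal{T}_{\lambda^j}$ (with $n$ fixed; note $T^j$ has fewer columns but the same ambient alphabet $[n]$, and $\lambda^j \in \Lambda_n^+$ as long as $\lambda_1^j \neq 0$, which holds since $j \ge 1$).

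So the argument is essentially: (1) show the chain-restriction property — the alcove model is ``column-local'' in the sense that deleting trailing columns of $T$ deletes exactly the trailing portion of the Bruhat chain; (2) apply Corollaries \ref{pisnlambda} and \ref{keyofpi} to $T^j$ to get $\pi^{(j,1)} = \pi_{T^j} \in S_n^{\lambda^j}$ with $Y_{\lambda^j}(\pi^{(j,1)}) = S(T^j)$; (3) invoke the Reiner--Shimozono characterization (their Lemma 8, as restated in the excerpt) that $w^j$ is the unique element of $S_n^{\lambda^j}$ whose $\lambda^j$-key is $S(T^j)$, and conclude $w^j = \pi^{(j,1)}$. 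No induction on $j$ is actually needed once step (1) is isolated, since each $j$ is handled independently.

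The main obstacle is step (1): I need to be careful that ``same alphabet, fewer columns'' genuinely produces the same chain. The one subtlety is whether the distinct column lengths $\zeta_h$ and the markers $\beta_h$ for $\lambda^j$ agree with a prefix of those for $\lambda$ — they need not, since removing columns can remove an entire column-length class — but this does not affect the \emph{values} $\pi^{(a,b)}_k$ at any location $(a,b)$ with $a \le j$, only the indexing bookkeeping; the greedy algorithm's output at each location is determined entirely by $\pi^{(a,b+1)}$ and the entries $T(a-1,b)$, $T(a,b)$, all of which are intrinsic to $T^j$. A clean way to phrase this is to note that the description ``$\pi^{(a,b)}_k = T(a-1,k)$ for $k<b$ and $\pi^{(a,b)}_k = T(a,k)$ for $b \le k \le c_a$, with the remaining entries determined by the greedy moves'' given in Section 4 refers only to data present in $T^j$ when $a \le j$; so the two computations coincide verbatim. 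Once that is granted, the rest is immediate from the cited corollaries and the Reiner--Shimozono lemma.
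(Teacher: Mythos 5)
Your proposal is correct and follows essentially the same route as the paper's proof: observe that $\pi^{(j,1)} = \pi_{T^j}$ because the greedy algorithm up through location $(j,1)$ is blind to columns beyond $C_j$, then apply Corollaries \ref{keyofpi} and \ref{pisnlambda} to $T^j$ and invoke the Reiner--Shimozono characterization of the minimal defining chain. Your extra remarks about the $\zeta_h$/$\beta_h$ bookkeeping and the (correctly discarded) induction are sound but not needed; the paper states the column-locality observation in one sentence and proceeds directly.
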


\begin{proof}For $1 \leq j \leq \lambda_1$, the location $(j,1)$ is the last location in $\lambda^j$.  Since the production of $\pi_T$ applies the greedy algorithm as the locations advance from $(1,1)$ to $(\lambda_1,1)$, the construction of $\pi^{(j,1)}$ is independent of the columns to the right of $C_j$.  In other words $\pi^{(j,1)} = \pi_{T^j}$, the final permutation when the greedy algorithm is applied to $T^j$.  Thus Corollary \ref{keyofpi} gives $Y_{\lambda^j}(\pi^{(j,1)}) = S(T^j)$.  By Corollary \ref{pisnlambda} we have $\pi^{(j,1)} \in S_n^{\lambda^j}$, so $\pi^{(j,1)}$ is the shortest permutation satisfying the previous condition. \end{proof}

As an example, when $T$ is the first tableau in Figure 1, the first, fifth, eighth, and tenth permutations in Figure 1 make up the minimal defining chain for $T$.  Applying the proposition for $j = \lambda_1$ we see that:

\begin{thm}\label{demchar}Let $T \in \mathcal{T}_\lambda$ and $w \in S_n^\lambda$.  Then $T \in \mathcal{D}_{\lambda, w}$ if and only if $\pi_T \leq w$. \end{thm}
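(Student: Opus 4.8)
The plan is to combine the characterization of membership in $\mathcal{D}_{\lambda, w}$ in terms of minimal defining chains with Proposition \ref{minimaldefchain}. Recall from the discussion preceding Proposition \ref{minimaldefchain} that, by the results of \cite{RS2} and \cite{LMS}, a tableau $T$ lies in $\mathcal{D}_{\lambda, w}$ if and only if its minimal defining chain $w^1 \leq \dots \leq w^{\lambda_1}$ satisfies $w^{\lambda_1} \leq w$ in the Bruhat order. So the whole statement reduces to identifying $w^{\lambda_1}$ with $\pi_T$.

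First I would invoke Proposition \ref{minimaldefchain} with $j = \lambda_1$: it states that $w^j = \pi^{(j,1)}$ for all $1 \leq j \leq \lambda_1$, and in particular $w^{\lambda_1} = \pi^{(\lambda_1, 1)} = \pi_T$. Substituting this into the Deodhar/Reiner--Shimozono criterion immediately yields: $T \in \mathcal{D}_{\lambda, w}$ if and only if $\pi_T \leq w$. That is the entire argument.

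Strictly speaking, one should check that the criterion applies with $w \in S_n^\lambda$ rather than an arbitrary element of $S_n$, but this is harmless: the condition $w^{\lambda_1} \leq w$ for $w^{\lambda_1} \in S_n^{\lambda}$ (which holds by Corollary \ref{pisnlambda}) is equivalent to $w^{\lambda_1} \leq w'$ for the minimal coset representative $w'$ of $w$, since $w^{\lambda_1}$ being a minimal coset representative means $w^{\lambda_1} \leq w \iff w^{\lambda_1} \leq w'$; and here $w$ is already taken in $S_n^\lambda$. Alternatively, one can simply note that the equivalence $S(T) \leq Y_\lambda(w) \iff \pi_T \leq w$ for $\pi_T, w \in S_n^\lambda$ is exactly the Bruhat-order compatibility of the key bijection recalled at the end of Section 2, combined with Corollary \ref{keyofpi} (which gives $Y_\lambda(\pi_T) = S(T)$). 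Either route closes the proof.

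There is no real obstacle here — the theorem is an immediate corollary of Proposition \ref{minimaldefchain} together with previously cited external results. The only thing to be careful about is making sure the quantifiers in the defining-chain criterion line up with the statement, i.e. that "there exists a defining chain with top $\leq w$" has been correctly reduced (via Deodhar's minimal defining chain lemma) to "the minimal defining chain has top $\leq w$," and that the top of the minimal defining chain is $\pi_T$. Both of these are already established in the text, so the proof is essentially a one-line application.
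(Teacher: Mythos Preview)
Your proposal is correct and matches the paper's own argument: the theorem is obtained exactly by applying Proposition~\ref{minimaldefchain} at $j=\lambda_1$ to identify $w^{\lambda_1}$ with $\pi_T$ and then invoking the minimal-defining-chain criterion from \cite{RS2}. The additional remarks you make about $S_n^\lambda$ and the alternative route through Corollary~\ref{keyofpi} are fine but unnecessary for the result as stated.
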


Lastly, Theorem 3.6 (3) of \cite{Le1} presents a Demazure character formula for $\lambda$ and $w$ obtained by summing the weights of all admissible subsets whose corresponding saturated chain has its final permutation less than or equal to $w$.  Let $Ad(\lambda, w)$ denote the set of admissible subsets from this theorem that contribute to the Demazure character for $\lambda$ and $w$.  It is known that the process of constructing the tableau $T_J$ from an admissible subset $J$ described in Section 4 is a bijection from $Ad(\lambda, w)$ to the set of corresponding tableaux.  Define $\mathcal{A}_{\lambda, w} := \{ T_J \in \mathcal{T}_\lambda \hspace{1mm} | \hspace{1mm} J \in Ad(\lambda, w) \}$.  The following result is clear once the above constructions are understood:

\begin{cor}Fix $\lambda \in \Lambda_n^+$ and $w \in S_n^\lambda$.  Then $\mathcal{D}_{\lambda, w} = \mathcal{A}_{\lambda, w}$.\end{cor}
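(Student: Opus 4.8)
The statement to prove is that $\mathcal{D}_{\lambda, w} = \mathcal{A}_{\lambda, w}$, i.e. that the tableaux summed over in the Demazure character formula from Theorem~1 of \cite{RS1} (equivalently, from our Theorem~\ref{demchar}) are exactly the tableaux arising from the admissible subsets that contribute to the alcove-model Demazure character formula of Theorem~3.6(3) of \cite{Le1}. The plan is to chase the definitions through the bijections that have already been set up, using Theorem~\ref{demchar} as the key bridge.

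First I would unwind $\mathcal{A}_{\lambda, w}$. By definition $T \in \mathcal{A}_{\lambda, w}$ means $T = T_J$ for some $J \in Ad(\lambda, w)$. Recall from Section~4 that the map $J \mapsto T_J$ is the inverse of the filling map: one forms the saturated Bruhat chain $B_J$ corresponding to $J$ and reads off the columns of $T_J$ from the entries of the intermediate permutations. In particular the saturated chain $B_{T_J}$ produced by the inverse of the filling map applied to $T_J$ is precisely $B_J$, so its final permutation is $\pi_{T_J}$. Now, by the definition of $Ad(\lambda, w)$ (the admissible subsets contributing to the Demazure character for $\lambda$ and $w$ in Theorem~3.6(3) of \cite{Le1}), $J \in Ad(\lambda, w)$ if and only if the final permutation of $B_J$ is $\leq w$ in the Bruhat order; that is, if and only if $\pi_{T_J} \leq w$. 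So $T \in \mathcal{A}_{\lambda, w}$ if and only if $T \in \mathcal{T}_\lambda$ and $\pi_T \leq w$.

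Second I would unwind $\mathcal{D}_{\lambda, w}$: by Theorem~\ref{demchar}, $T \in \mathcal{D}_{\lambda, w}$ if and only if $T \in \mathcal{T}_\lambda$ and $\pi_T \leq w$. Comparing the two characterizations gives $\mathcal{D}_{\lambda, w} = \mathcal{A}_{\lambda, w}$ immediately. The one point that needs a sentence of care is that the correspondence $J \leftrightarrow T_J$ is a genuine bijection between $Ad(\lambda, w)$ and the set it produces, so that $\mathcal{A}_{\lambda, w}$ really is a subset of $\mathcal{T}_\lambda$ and not a multiset — but this is exactly the ``known'' fact cited just before the statement (that $J \mapsto T_J$ from $Ad(\lambda, w)$ to its image is a bijection), together with the observation from Section~4 that every admissible subset, not just those in $Ad(\lambda, w)$, yields a semistandard tableau.

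The main (and only real) obstacle is bookkeeping: one must be confident that ``the final permutation of the chain attached to $J$'' in Le\-nart's formulation coincides with our $\pi_{T_J}$, i.e. that the inverse filling map of \cite{Le2} and the greedy-algorithm procedure of Section~4 produce the same final permutation. But this identification is precisely what Section~4 establishes — $B_T$ is defined there to be the saturated chain produced by the full version of the inverse of the filling map, and its final permutation is $\pi_T$ by construction — so no new work is needed. Hence the proof is a short definition-chase: expand $\mathcal{A}_{\lambda, w}$ via the $T_J$ construction and the definition of $Ad(\lambda, w)$ to get the condition $\pi_T \leq w$, expand $\mathcal{D}_{\lambda, w}$ via Theorem~\ref{demchar} to get the same condition, and conclude.
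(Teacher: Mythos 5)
Your proof is correct and takes essentially the same approach as the paper: the paper's one-line proof exhibits the map $T \mapsto B_T \mapsto J_T$ as the bijection between $\mathcal{D}_{\lambda,w}$ and $Ad(\lambda,w)$, which is exactly the correspondence you unwind, and both arguments ultimately rest on Theorem~\ref{demchar} together with the $J \leftrightarrow T_J$ bijection and the fact that $J \in Ad(\lambda,w)$ is the condition $\pi_{T_J} \leq w$. Your version simply spells out the membership-condition comparison that the paper leaves implicit.
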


\begin{proof}The process of finding the minimal defining chain for $T$, expanding it to the saturated chain $B_T$ via the inverse of the filling map, and subsequently finding its admissible subset $J_T$ is the desired bijection between the generating sets from $\mathcal{D}_{\lambda, w}$ to $Ad(\lambda, w)$.\end{proof}

\vspace{1pc}\noindent \textbf{Acknowledgments.}  The author would like to thank Cristian Lenart, whose helpful discussions inspired the results of this paper.  I would also like to thank Bob Proctor for always helping me see the forest from the trees; in particular his comments motivated some of the discussions in Section 6, and he suggested several expositional improvements.  Thanks also to David Lax for his helpful suggestions on the first draft of the paper.

\bibliographystyle{amsalpha}

\begin{thebibliography}{1}

\bibitem[BB]{BB}Bj\"{o}rner, A., and Brenti, F., Combinatorics of Coxeter Groups, Springer Science and Business Media Inc., (2000).

\bibitem[Le1]{Le1}Lenart, C., A new combinatorial model in representation theory, preprint, http://www.albany.edu/~lenart/articles/survey.pdf, 2005.

\bibitem[Le2]{Le2}Lenart, C., From Macdonald polynomials to a charge statistic beyond type A, J. Combin. Theory Ser. A \textbf{119}, 683-712 (2012).

\bibitem[LMS]{LMS}Lakshmibai, V., Musili, C., and Seshadri, C.S., Geometry of G/P IV, Proc. Indian Acad. Sc. 88A , 280-362 (1979).

\bibitem[LP]{LP}Lenart, C., and Postnikov, A., Affine Weyl groups in $K$-theory and representation theory, Int. Math. Res. Not., p. 1 - 65 (2007), Art. ID rnm038.

\bibitem[LS]{LS}Lascoux, A., and Sch{\"u}tzenberger, M.-P., Keys and standard bases, Invariant Theory and Tableaux, IMA Volumes in Math and its Applications (D. Stanton, ed.), Southend on Sea, UK, \textbf{19}, 125-144 (1990).

\bibitem[PW]{PW}Proctor, R. and Willis, M., Semistandard tableaux for Demazure characters (key polynomials) and their atoms, Euro. J. of Combin. \textbf{43}, 172-184 (2015).

\bibitem[RS1]{RS1}Reiner, V. and Shimozono, M., Key polynomials and a flagged {L}ittlewood-{R}ichardson rule, J. Combin. Theory Ser. A \textbf{70}, 107-143 (1995).

\bibitem[RS2]{RS2}Reiner, V. and Shimozono, M., Straightening for standard monomials on Schubert varieties, J. of Algebra \textbf{195}, 130-140 (1997).

\bibitem[Wil]{Wil}Willis, M., A direct way to find the right key of a semistandard Young tableau, Ann. Combin. \textbf{17} (2), 393-400 (2013).





\end{thebibliography}

\end{document}